\documentclass[12pt,reqno]{article}

\usepackage[usenames]{color}
\usepackage{amssymb}
\usepackage{graphicx}
\usepackage{amscd}

\usepackage[colorlinks=true,
linkcolor=webgreen,
filecolor=webbrown,
citecolor=webgreen]{hyperref}

\definecolor{webgreen}{rgb}{0,.5,0}
\definecolor{webbrown}{rgb}{.6,0,0}

\usepackage{color}
\usepackage{fullpage}
\usepackage{float}

\usepackage{graphics,amsmath}
\usepackage{amsthm}
\usepackage{amsfonts}
\usepackage{latexsym}
\usepackage{epsf}

\setlength{\textwidth}{6.5in}
\setlength{\oddsidemargin}{.1in}
\setlength{\evensidemargin}{.1in}
\setlength{\topmargin}{-.1in}
\setlength{\textheight}{8.4in}

\newcommand{\seqnum}[1]{\href{http://oeis.org/#1}{\underline{#1}}}
\def\Enn{{\mathbb{N}}}

\begin{document}

\author{Jean-Paul Allouche\thanks{Author partially supported by the ANR
project ``FAN'' (Fractals et Num\'eration), ANR-12-IS01-0002.} \\
CNRS, IMJ-PRG, UPMC, 4 Pl.~Jussieu, F-75252 Paris Cedex 05, France\\
{\tt jean-paul.allouche@imj-prg.fr}\\
\and
Jeffrey Shallit\thanks{Author partially supported by NSERC.}\\
School of Computer Science, University of Waterloo, \\ 
Waterloo, ON  N2L 3G1,
Canada \\
{\tt shallit@cs.uwaterloo.ca}}

\title{On the subword complexity of the fixed point of
$a \rightarrow aab$, $b \rightarrow b$, and generalizations}

\maketitle

\theoremstyle{plain}
\newtheorem{theorem}{Theorem}
\newtheorem{corollary}[theorem]{Corollary}
\newtheorem{lemma}[theorem]{Lemma}
\newtheorem{proposition}[theorem]{Proposition}

\theoremstyle{definition}
\newtheorem{definition}[theorem]{Definition}
\newtheorem{example}[theorem]{Example}
\newtheorem{conjecture}[theorem]{Conjecture}

\theoremstyle{remark}
\newtheorem{remark}[theorem]{Remark}

\begin{abstract}
We find an explicit closed form for the subword complexity of the
infinite fixed point of the morphism sending $a \rightarrow aab$ and
$b \rightarrow b$.  This morphism is then generalized in three
different ways, and we find similar explicit expressions for the
subword complexity of the generalizations.
\end{abstract}

\section{Introduction}

In this paper we start by considering
a certain morphism $h$ over $\{a,b\}$,
namely, the one where $h(a) = aab$ and $h(b) = b$.  This morphism was
previously studied by the authors and J. Betrema
\cite{Allouche&Betrema&Shallit:1989} and Firicel \cite{Firicel:2011}.

We can iterate $h$ (or any endomorphism)
as follows: set
$h^0 (a)$ and $h^n (a) = h(h^{n-1} (a))$
for $n \geq 1$.    
Note that for the particuar morphism $h$ defined above, we have
$|h^n(a)| = 2^{n+1} - 1$ for $n \geq 0$, a fact that is
easily proved by induction on $n$.

The infinite fixed point of $h$, which we denote
by $h^\omega(a)$
is $\lim_{n \rightarrow \infty} h^n (a)$.  It satisfies
$h( h^\omega(a) ) = h^\omega(a) $.    We also define
${\bf z} = h^\omega(a) = aabaabbaabaabbb \cdots$.

Let $\bf a$ be an infinite word, where ${\bf a} = a_0 a_1 a_2 \cdots$.
We define ${\bf a}[j] = a_j$.   Let $[i..j]$ for integers
$i \leq j-1$ denote the sequence $i, i+1, \ldots, j$.
By a {\it factor\/} of an infinite word we mean a sub-block of the form
$a_i a_{i+1} \cdots a_j$ for $0 \leq i \leq j+1 < \infty$,
which we write as ${\bf a}[i..j]$.   If $i = j+1$ then the resulting
subword is empty.  Sometimes we need to distinguish between a factor
(which is the word itself) and an occurrence of that
factor in $\bf a$ (which is specified by a starting position and length).
The {\it subword complexity\/} of an infinite word $\bf a$
is the function $\rho = \rho_{\bf a}$ that maps a natural number
$n$ to the number of distinct factors of $\bf a$ of length $n$.

In this paper we prove the following exact
formula for $\rho_{\bf z} (n)$:

\begin{theorem}
For $n \geq 0$ we have $\rho_{\bf z} (n) = \sum_{0 \leq i \leq n}
	\min(2^i, n-i+1)$.
\label{thm1}
\end{theorem}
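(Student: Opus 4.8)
The plan is to first expose the self-similar block structure of ${\bf z}$, then translate the problem into counting right-special factors, and finally carry out that count through a recursion that mirrors the morphism. For \emph{Step 1}, I would record the identity $h(aab^k)=aab\,aab\,b^k=(aab^1)(aab^{k+1})$, which shows by induction that every maximal block of $a$'s in ${\bf z}$ has length exactly $2$ and that the lengths of the successive maximal blocks of $b$'s form the fixed point of $k\mapsto 1,k+1$. That is, ${\bf z}=aa\,b^{r_1}\,aa\,b^{r_2}\,aa\,b^{r_3}\cdots$ with $r_n=v_2(n)+1$ the ruler sequence, $v_2$ denoting the $2$-adic valuation.

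\emph{Step 2: reduction to right-special factors.} Over the binary alphabet one has $\rho(n+1)-\rho(n)=s(n)$, where $s(n)$ is the number of right-special factors of length $n$ (those $w$ for which $wa$ and $wb$ are both factors), and $\rho(0)=1$; so it suffices to compute $s$. I would first dispose of the easy cases: the empty word and the two single letters are special; no factor of length $\ge 2$ ending in $a$ is special, since the letter preceding that final $a$ forces the continuation; and $b^m$ is special for every $m$ because the $b$-runs are unbounded. Every remaining special factor then has the form $ub^j$ with $u$ ending in the letter $a$ and $j\ge 1$, and is special precisely when the set $S_u$ of lengths of $b$-runs that can immediately follow an occurrence of $u$ contains $j$ together with some strictly larger value.

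\emph{Step 3 (the crux): computing $S_u$.} Here I would exploit the two ruler relations $r_{n-1}\ge 2\Rightarrow n$ odd $\Rightarrow r_n=1$, and $r_{n-1}=1\Rightarrow n=2p,\ r_n=r_p+1$. Reading $u$ leftward from its trailing $aa$ (block $n$): if the preceding run is revealed to have length $\ge 2$, then $r_n=1$ is forced and $ub^j$ is never special; if it is revealed to equal $1$, the situation desubstitutes to the parent block $p$, raising every admissible following length by one; and if it is not pinned down one gets $S_u=\{1,2,3,\dots\}$. Iterating, each context $u$ producing special factors satisfies $S_u=\{k+1,k+2,\dots\}$ for a well-defined level $k\ge 0$, so $ub^j$ is right-special iff $j\ge k+1$. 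The main obstacle is the precise bookkeeping of this desubstitution: I must count, for every length $\ell$, how many contexts $u$ of length $\ell$ have each level $k$, taking care of the boundary and degenerate cases (the single-letter context $a$, partially versus fully revealed runs, the synchronization of $h$ at block edges) so that no context is missed or double-counted.

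\emph{Step 4: assembling the count.} Summing the contribution of $b^m$ and of the special $ub^j$ over all contexts and trailing lengths should yield the closed form $s(m)=m+1-K(m)$, where $K(m)=\max\{\,i\ge 0:2^i+i\le m+1\,\}$. To close the argument I would compare this with the first difference of the claimed formula: the quantity $\min(2^i,m+2-i)-\min(2^i,m+1-i)$ equals $1$ exactly when $2^i+i\ge m+2$ and $0$ otherwise, so together with the new top term $\min(2^{m+1},1)=1$ the first difference of $\sum_{i=0}^{m}\min(2^i,m+1-i)$ is again $m+1-K(m)$. Since the value at $0$ is $\min(2^0,1)=1=\rho(0)$, both sides have the same initial value and the same differences, giving $\rho(n)=1+\sum_{m=0}^{n-1}s(m)=\sum_{i=0}^{n}\min(2^i,n+1-i)$, as claimed.
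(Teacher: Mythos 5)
Your overall strategy is viable and genuinely different from the paper's: the paper never considers right-special factors, but instead classifies the length-$n$ factors of ${\bf z}$ by the length of their longest $b$-run, shows that each factor whose longest run exceeds a threshold $f(n)$ is uniquely determined by the position of that run (giving $n-t+1$ factors per run-length $t$), and counts the remaining class by an explicit function $g$. Within your outline, Steps 1, 2, and 4 are correct: the ruler-sequence structure of the $b$-runs, the reduction to counting right-special factors together with the criterion that $ub^j$ is special iff $j\in S_u$ and $S_u$ contains some larger element, and the computation showing that the first difference of $\sum_{0\le i\le n}\min(2^i,n-i+1)$ equals $m+1-K(m)$ are all sound.

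The gap is Step 3, which you yourself flag as ``the main obstacle'' and then do not carry out; it is exactly the content of the theorem. To conclude $s(m)=m+1-K(m)$ you must prove a precise enumeration: for every $\ell\ge 1$ there is \emph{exactly one} context $u$ of length $\ell$ with $S_u$ infinite, and its level is $k$ precisely when $2^{k+1}\le \ell\le 2^{k+2}-1$ (lengths $1,2,3$ having level $0$). Nothing in your sketch establishes this, and the bookkeeping is genuinely delicate because the desubstitution rules act differently at different depths. Writing $r_n=\nu_2(n)+1$ for the run lengths as in your Step 1: at level $0$ the context $bbaa$ is dead (two visible $b$'s force the next run to have length $1$), yet its level-$1$ analogue $bbaabaa$ is alive, because there the corresponding constraint $r_{2p-2}\ge 2$ is vacuous; at level $2$ the word $bbaabbaabaa$ is not even a factor of ${\bf z}$ (the run at the odd index $2p-3$ has length exactly $1$), whereas $bbbaabaabbaabaa$ \emph{is} a live level-$2$ context, since the constraint $r_{2p-4}\ge 3$ is again vacuous. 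In particular the number of live contexts doubles from each level to the next ($2^{k+1}$ of them at level $k\ge 1$), so desubstitution is not a bijection between consecutive levels: the extra contexts arise from partially revealed image runs, and they must all be found and shown to be the only ones. Until this enumeration is stated and proved, your ``summing \ldots should yield the closed form'' is an unproved assertion, and the argument establishes only the consistency check of Step 4, not the theorem.
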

\noindent Previously, upper and lower bounds were given by Firicel
\cite{Firicel:2011}.

The first few values of $\rho_{\bf z} (n)$ are given in
Table~\ref{tab-one}.  It is sequence \seqnum{A006697} in Sloane's
{\it Encyclopedia of Integer Sequences} \cite{Sloane}.

\begin{table}[H]
\begin{center}
\begin{tabular}{c|ccccccccccccccccccccccc}
$n$ &    0 & 1 & 2 & 3 & 4 & 5 & 6 & 7 & 8 & 9 & 10 & 11 & 12 & 13 & 14 & 15 & 16 & 17\\
\hline
$\rho_{\bf z}(n)$ & 1 & 2&  4&  6&  9& 13& 17& 22& 28& 35& 43& 51& 60& 70& 81& 93&106&120&\\
\end{tabular}
\end{center}
\caption{Subword complexity of $\bf z$}
\label{tab-one}
\end{table}

Our method is based on the following factorization theorem for
$\bf z$, which appears in \cite{Allouche&Betrema&Shallit:1989}.
Let $k \geq 2$ be an integer, and define
$\nu_k(n)$ to be the exponent of the largest power of $k$
dividing $n$.  

\begin{theorem}
$$ {\bf z} = \prod_{i \geq 1} a \, b^{\nu_2(i)} = \prod_{i \geq 1} a \, a \,
	b^{\nu_2(i) +1} .$$
\label{one}
\end{theorem}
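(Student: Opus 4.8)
The plan is to prove the two displayed products equal each other and equal $\mathbf{z}$ in two stages. First I would establish the equality of the two product forms as a purely arithmetic regrouping, independent of anything about $h$. Pairing consecutive factors with indices $2i-1$ and $2i$ and using the elementary facts $\nu_2(2i-1)=0$ (odd index) and $\nu_2(2i)=1+\nu_2(i)$, the two factors combine as
$$ a\,b^{\nu_2(2i-1)} \cdot a\,b^{\nu_2(2i)} = a \cdot a\,b^{1+\nu_2(i)} = a\,a\,b^{\nu_2(i)+1}, $$
which is exactly the $i$-th factor of the second product. Hence $\prod_{i\ge 1} a\,b^{\nu_2(i)} = \prod_{i\ge 1} a\,a\,b^{\nu_2(i)+1}$, and the same pairing works for any finite initial segment consisting of an even number of factors.

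Next, to identify this word with $\mathbf{z}$, I would use the fixed-point characterization. Writing $\mathbf{w} = \prod_{i\ge 1} a\,b^{\nu_2(i)}$ and applying the morphism $h$ factor by factor,
$$ h(\mathbf{w}) = \prod_{i\ge 1} h(a)\,h(b)^{\nu_2(i)} = \prod_{i\ge 1} a\,a\,b \cdot b^{\nu_2(i)} = \prod_{i\ge 1} a\,a\,b^{\nu_2(i)+1}, $$
and by the regrouping of the first stage this last product equals $\mathbf{w}$. Thus $\mathbf{w}$ is a fixed point of $h$. Since $\mathbf{w}$ begins with the letter $a$ (its first factor is $a\,b^{\nu_2(1)} = a$), and any fixed point of $h$ beginning with $a$ must admit every $h^n(a)$ as a prefix and hence coincide with $\lim_n h^n(a) = h^\omega(a)$, we conclude $\mathbf{w} = h^\omega(a) = \mathbf{z}$.

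Alternatively, one can argue at the finite level by induction on $n$, proving $h^n(a) = \prod_{1 \le i \le 2^n} a\,b^{\nu_2(i)}$. The base case $n=0$ is $h^0(a) = a$, and the inductive step applies $h$ to the product and then invokes the finite regrouping with $M = 2^n$ factors to pass from $\prod_{1 \le i \le 2^n} a\,a\,b^{\nu_2(i)+1}$ back to $\prod_{1 \le j \le 2^{n+1}} a\,b^{\nu_2(j)}$; letting $n \to \infty$ along the nested prefixes $h^n(a)$ then yields the claim. A useful consistency check is that $\sum_{i=1}^{2^n}(1+\nu_2(i)) = 2^n + \nu_2((2^n)!) = 2^{n+1}-1 = |h^n(a)|$ by Legendre's formula, matching the length recorded in the introduction.

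I expect the only genuine obstacle is getting the index bookkeeping of the regrouping exactly right, namely verifying that $(2i-1,2i)$ is the correct pairing and that $\nu_2$ behaves as claimed on the two parities; once that lemma is in hand, both the fixed-point argument and the inductive argument are immediate.
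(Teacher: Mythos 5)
Your proof is correct; note, however, that this paper does not actually prove Theorem~\ref{one} --- it is imported without proof from the earlier paper \cite{Allouche&Betrema&Shallit:1989} --- so what you have written fills a gap rather than parallels an internal argument. Both of your routes are sound. The regrouping lemma is exactly right: the pairs $(2i-1,2i)$ partition the index set, $\nu_2(2i-1)=0$ and $\nu_2(2i)=1+\nu_2(i)$, so the partial products of the two factorizations agree along a cofinal sequence of prefixes and the infinite words coincide. The fixed-point step then works because $h$ is non-erasing, hence acts on an infinite word factor by factor, giving $h(\mathbf{w})=\prod_{i\geq 1} a\,a\,b^{\nu_2(i)+1}=\mathbf{w}$; and your appeal to uniqueness of the fixed point beginning with $a$ is justified by the standard induction (if $h^n(a)$ is a prefix of $\mathbf{w}=h(\mathbf{w})$, then $h^{n+1}(a)$ is a prefix of $h(\mathbf{w})=\mathbf{w}$, and $|h^n(a)|\to\infty$). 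Your second, finite variant --- proving $h^n(a)=\prod_{1\leq i\leq 2^n} a\,b^{\nu_2(i)}$ by induction and letting $n\to\infty$ --- is arguably preferable, since it avoids manipulating infinite products altogether and only uses the regrouping on blocks of $2^n$ factors; the Legendre-formula length check $\sum_{1\leq i\leq 2^n}(1+\nu_2(i))=2^n+(2^n-1)=2^{n+1}-1$ correctly matches the length $|h^n(a)|=2^{n+1}-1$ recorded in the introduction. Either version would serve as a complete, self-contained proof of the theorem.
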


\begin{remark}
It is interesting to note that function
$n \rightarrow \sum_{0 \leq i \leq n} \min(2^i, n-i+1)$
also counts the maximum
number of distinct factors (of all lengths)
that a binary string of length $n$ can have \cite{Ivanyi:1987,Shallit:1993,
Flaxman&Harrow&Sorkin:2004}.
We do not know any bijective proof of this fact, which we leave as
an open problem for the reader.
\end{remark}

We then generalize the morphism $h$ in three different ways,
and compute the subword complexity of each generalization.

\section{The subword complexity of $\bf z$}

By a $b$-run, we mean a maximal occurrence
of a block of consecutive $b$'s within a word.
Here by ``maximal'' we mean that the block has no $b$'s to either the
left or right.  For example, the word $baabbbaabb$ has three $b$-runs, of
length $1, 3,$ and $2$, respectively.  

Given a factor $w$ of $\bf z$, we call a $b$-run occurrence
in $w$ {\it interior\/} to $w$ if it does not correspond to either
a prefix or suffix of $w$.  For example, in $baabbbaabb$ there is
exactly one interior $b$-run, which is of length $3$.

Given an occurrence of a length-$n$ factor $w$ of $\bf z$,
we define its {\it cover} to be the shortest factor of the form
$\prod_{j \leq i \leq k} a \, a \, b^{\nu_2(i)+1}$ for which
$w$ appears as a factor.  The {\it cover interval} is defined
to be the set $\{ j, j+1, \ldots, k \}$.  We call the integer $j$ (resp., $k$)
the {\it left\/} (resp., {\it right}) {\it edge} of the cover.
For example, the underlined factor below has cover
$aabbaabaabbb$ with left edge $2$ and right edge $4$:
$$ aabaab\underline{baabaab}bbaabaabbaabaabbb \cdots .$$

\begin{lemma}
Let $n \geq 1$.
If a factor of {\bf z} is of length $\geq 2^{n+1}+n-2$, then it must contain
a $b$-run of length at least $n$.
\end{lemma}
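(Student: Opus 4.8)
The plan is to exploit the second factorization in Theorem~\ref{one}, namely ${\bf z} = \prod_{i \geq 1} a\,a\,b^{\nu_2(i)+1}$, which exhibits every $b$-run of ${\bf z}$ explicitly: the $i$-th block $a\,a\,b^{\nu_2(i)+1}$ contains a $b$-run of length $\nu_2(i)+1$. Such a run has length at least $n$ precisely when $\nu_2(i) \geq n-1$, that is, when $2^{n-1} \mid i$. Thus the \emph{long} $b$-runs (those of length $\geq n$) occur exactly at the blocks whose index is a multiple of $2^{n-1}$, and two consecutive long runs are separated by exactly $2^{n-1}-1$ intermediate blocks, each of which carries only a short run since $\nu_2(i) \leq n-2$ there.

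Next I would argue that any factor $w$ containing no $b$-run of length $\geq n$ must be confined between two consecutive long runs. Indeed, if a full long run of ${\bf z}$ were interior to $w$, then $w$ would contain a $b$-run of length $\geq n$; so a long run can meet $w$ only in a truncated prefix or suffix of $w$, and $w$ cannot straddle three long runs, since the middle one would then be interior. Writing $B$ and $B'$ for the two consecutive long runs bracketing $w$, the factor therefore consists of a truncated suffix of $B$ of length at most $n-1$, the $2^{n-1}-1$ intermediate blocks in full, and finally the two letters $a\,a$ that begin the block of $B'$ followed by a truncated prefix of $B'$ of length at most $n-1$.

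I would then maximize the length over this configuration. Taking $n-1$ trailing $b$'s from $B$, all intermediate blocks, and the $a\,a$ followed by $n-1$ $b$'s from the block of $B'$ gives length
\[
(n-1) \;+\; \sum_{r=1}^{2^{n-1}-1}\bigl(3 + \nu_2(r)\bigr) \;+\; \bigl(2 + (n-1)\bigr).
\]
The only nontrivial evaluation is $\sum_{r=1}^{2^{n-1}-1}\nu_2(r) = \bigl(2^{n-1}-1\bigr)-(n-1)$, which follows from the standard identity $\sum_{r=1}^{2^{m}}\nu_2(r)=\nu_2\bigl((2^m)!\bigr)=2^m-1$ together with $\nu_2\bigl(2^{n-1}\bigr)=n-1$. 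Substituting and simplifying collapses the length to $2^{n+1}+n-3$. Since this is the largest length attainable by a factor with no $b$-run of length $\geq n$, and it is realized (for any sufficiently large choice of the bracketing long runs, using that ${\bf z}$ is infinite), every factor of length at least $2^{n+1}+n-2$ must contain a $b$-run of length $\geq n$.

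The main obstacle I anticipate is the confinement step: one must make precise, ideally in the cover-interval language introduced just before the lemma, that no full long run can be interior to $w$ and that $w$ meets at most two (necessarily consecutive) long runs, while correctly accounting for the truncated boundary runs. In particular, care is needed to verify that taking exactly $n-1$ boundary $b$'s at each end, and no more, genuinely yields the maximum, and that the $a\,a$ preceding $B'$ should be counted but the $a\,a$ of $B$'s own block should not. The arithmetic with $\nu_2$ is routine once the factorial identity is invoked; it is the combinatorial bookkeeping at the two ends that requires attention.
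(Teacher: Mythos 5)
Your main argument is essentially the paper's own: bracket a factor avoiding $b$-runs of length $\geq n$ between two consecutive ``long'' runs (those coming from blocks whose index is divisible by $2^{n-1}$) and maximize its length. Where the paper identifies the extremal factor directly as $b^{n-1}h^n(a)b^{-1}$, of length $(n-1)+(2^{n+1}-1)-1=2^{n+1}+n-3$, you compute the same number by summing block lengths and invoking Legendre's formula for $\sum_{r=1}^{2^{n-1}-1}\nu_2(r)$; both evaluations agree, and your boundary bookkeeping (at most $n-1$ letters $b$ at each end, the $aa$ of $B'$'s block counted, that of $B$'s block not) is correct.

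There is, however, one genuine gap: your confinement claim---that any factor $w$ with no $b$-run of length $\geq n$ ``must be confined between two consecutive long runs''---is false for factors occurring before the \emph{first} long run of ${\bf z}$, i.e., for prefixes of ${\bf z}$ (and their factors) lying within blocks $1,\dots,2^{n-1}$. For such $w$ there is no bracketing run $B$ on the left, so the $(n-1)$ you charge to a truncated suffix of $B$ is not available, and your maximization simply does not apply to this case. The paper treats it separately as its case (a): the first $b$-run of length $n$ occurs as a suffix of $h^n(a)$, so factors of this type have length at most $2^{n+1}-2$, which is at most $2^{n+1}+n-3$ for $n \geq 1$, and the lemma's bound is unaffected. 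Adding this one case (or restating the confinement step as ``$w$ contains no long run in its interior, hence meets at most two long runs, and if it meets fewer the length bound only improves'') completes your proof.
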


\begin{proof}
We consider the longest possible factor $w$ of $\bf z$ having all $b$-runs of
length $< n$.   Such a factor clearly occurs either (a) before the first
$b$-run of length $n$ in
$\bf z$, or (b) between two occurrences of a $b$-run of length $\geq n$
in $\bf z$.    

In case (a), the first $b$-run of length $n$ occurs as a suffix
of $h^n (a)$, which is of length $2^{n+1} - 1$.  So by removing the last
letter we get a factor of length $2^{n+1} - 2$ having no $b$-run of length
$n$.  

In case (b), $w$ has a cover with left edge $\ell$ and right edge $r$,
both of which are divisible by
$2^n$.  All other integers in the cover interval are not divisible by
$2^n$, for if they were, $w$ would have a $b$-run of length $\geq n$.
So $r-\ell = 2^n$.  The longest such $w$ must then be of the form
$w = b^{n-1} h^n (a) b^{-1}$,
and the length of this factor is $2^{n+1} + n - 3$.
(If $x = wa$ is a word, and $a$ is a single letter, then by $x a^{-1}$ we
mean the word $w$.)

\end{proof}

\begin{definition}
Define the function $f$ from $\Enn$ to $\Enn$ as follows:
$$ f(i) = j \text{ for } 2^{j+1}+j-2 \leq i \leq 2^{j+2}+j-2.$$
\end{definition}

The first few values of the function $f$ are given in Table~\ref{t-two}.

\begin{table}[H]
\begin{center}
\begin{tabular}{c|ccccccccccccccccccccccc}
$n$ &    0 & 1 & 2 & 3 & 4 & 5 & 6 & 7 & 8 & 9 & 10 & 11 & 12 & 13 & 14 & 15 & 16 & 17\\
\hline
$f(n)$ & 0 & 0 & 0 & 1 & 1 & 1 & 1 & 1 & 2 & 2 &  2 &  2 &  2 &  2 &  2 &  2 & 2 & 3 
\end{tabular}
\end{center}
\caption{Values of the function $f$}
\label{t-two}
\end{table}

\begin{corollary}
For $n \geq 0$ we have
\begin{itemize}
\item[(a)] every factor of $\bf z$ of length $n$ contains a $b$-run of
length at least $f(n)$;

\item[(b)] at least one factor of $\bf z$ of length $n$ has
longest $b$-run of length exactly $f(n)$;

\item[(c)] the shortest factor of $\bf z$ having two occurrences
of a $b$-run of length $n$ is of length $2^{n+1} + n - 1$.

\end{itemize}
\label{cor7}
\end{corollary}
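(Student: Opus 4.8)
The plan is to handle the three parts separately: part~(a) will follow at once from the preceding lemma, part~(b) from the extremal factor constructed inside its proof, and part~(c) from an explicit short factor together with a matching lower bound.

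For part~(a), set $j=f(n)$; by the definition of $f$ we have $n\ge 2^{j+1}+j-2$, so applying the preceding lemma with its parameter equal to $j$ shows that every length-$n$ factor of $\bf z$ contains a $b$-run of length at least $j=f(n)$ (when $n\le 2$ we have $f(n)=0$ and there is nothing to prove). For part~(b), keep $j=f(n)$ and recall that case~(b) of the proof of the preceding lemma, run with parameter $j+1$, produces the factor $w=b^{\,j}\,h^{j+1}(a)\,b^{-1}$ of length $2^{j+2}+j-2$, whose prefix and suffix are $b$-runs of length exactly $j$ and all of whose interior $b$-runs are shorter; thus the longest $b$-run of $w$ is exactly $j$. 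Since $f(n)=j$ forces $n\le 2^{j+2}+j-2=|w|$, I would select any length-$n$ sub-factor $u$ of $w$: every $b$-run of $u$ has length $\le j$ because $u$ lies inside $w$, while part~(a) guarantees a $b$-run of length $\ge f(n)=j$, so the longest $b$-run of $u$ is exactly $j=f(n)$, which is part~(b).

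For the upper bound in part~(c), I would use $h^{n+1}(a)=h^n(a)\,h^n(a)\,b$ together with the fact that $h^n(a)$ ends in $b^n$: the word $b^n\,h^n(a)$ then occurs in $\bf z$, being the trailing $b^n$ of the first copy of $h^n(a)$ followed by the second copy. This factor has length $n+(2^{n+1}-1)=2^{n+1}+n-1$; its prefix and suffix are $b$-runs of length $n$ and all its interior $b$-runs are shorter, so it realises two occurrences of a $b$-run of length $n$. For the lower bound I would work in the factorization ${\bf z}=\prod_{i\ge1}a\,b^{\nu_2(i)}$, where the $b$-run after the $a$ of block $i$ has length $\nu_2(i)$, so a $b$-run of $\bf z$ has length $\ge n$ precisely when $2^n\mid i$. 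Any factor with two maximal $b$-runs of length $\ge n$ must meet two distinct such $\bf z$-runs, at indices $i_1<i_2$ with $2^n\mid i_1,i_2$; since consecutive multiples of $2^n$ differ by $2^n$, the factor contains at least $n$ letters $b$ from the run at $i_1$, every block with index strictly between $i_1$ and $i_2$, and at least $n$ letters $b$ from the run at $i_2$. The decisive computation is that whenever $2^n\mid i$, the stretch of $\bf z$ from the last $n$ letters $b$ at index $i$ through the first $n$ letters $b$ at index $i+2^n$ is exactly $b^n\,h^n(a)$, of length $2^{n+1}+n-1$; this uses $\nu_2(i+j)=\nu_2(j)$ for $1\le j\le 2^n-1$. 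Since taking $i_2>i_1+2^n$ only inserts further complete blocks, every factor with two occurrences of a $b$-run of length $n$ has length at least $2^{n+1}+n-1$, matching the construction.

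The routine parts are (a) and (b), which are immediate from the lemma and its proof. The delicate point is the lower bound in~(c): one must verify that two $b$-runs of length $\ge n$ force their hosting indices to be distinct multiples of $2^n$, that the connecting word is always at least as long as $b^n h^n(a)$, and that truncating the two outer runs to exactly $n$ letters $b$ is the most economical possibility. Establishing the identity that the gap word equals $b^n h^n(a)$ for every admissible $i$ --- so that the bound is independent of which pair of runs is chosen --- is the one computation I expect to require genuine care.
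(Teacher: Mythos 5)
Your proposal is correct and takes essentially the same route as the paper: parts (a) and (b) follow from the preceding lemma together with the extremal factor $b^{j}h^{j+1}(a)b^{-1}$ constructed in its proof, and for part (c) both arguments use the factorization of Theorem~\ref{one} to force the two runs onto distinct indices divisible by the relevant power of $2$, so that the minimal factor is $b^{n}h^{n}(a)$ of length $2^{n+1}+n-1$ (your version, with the $\prod_{i\geq 1} a\,b^{\nu_2(i)}$ indexing, an explicit occurrence for the upper bound, and the gap computation via $\nu_2(i+j)=\nu_2(j)$, supplies details that the paper's terse proof of (c) omits, and parts (a)--(b) that it leaves implicit). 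One cosmetic slip: in $b^{j}h^{j+1}(a)b^{-1}$ the interior run coming from the index $2^{j}$ of the cover interval has length exactly $j$, not strictly less, but your argument only needs that every run has length at most $j$ while the prefix has length exactly $j$, so the conclusion stands.
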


\begin{proof}

For (c), the shortest factor clearly will start and end with 
$b$-runs of length $n$; otherwise we could remove symbols from the
start or end to get a shorter string with the same property.
So the cover interval begins and ends with integers
divisible by $2^{n-1}$.  The difference between these integers
is therefore at least $2^{n-1}$.  So the cover interval
is $n r^{n-1} (1)$.  The string corresponding to this cover interval
is $b^n h^n(a)$, which of length $2^{n+1} + n - 1$.

\end{proof}

\begin{lemma}
For every factor $w$ of $\bf z$, the
longest $b$-run in $w$ has at most one interior occurrence in $w$.
\end{lemma}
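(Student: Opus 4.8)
The plan is to read off the lengths of the $b$-runs of ${\bf z}$ directly from the factorization of Theorem~\ref{one}, and then to exploit the arithmetic of $\nu_2$ to show that two equally long interior runs can never both be longest. From the product ${\bf z} = \prod_{i \geq 1} a\,a\,b^{\nu_2(i)+1}$, the $b$-runs of ${\bf z}$ are exactly the blocks $b^{\nu_2(i)+1}$, so the $i$-th $b$-run (read from left to right) has length $\nu_2(i)+1$. In particular a $b$-run has length exactly $L$ precisely when its index $i$ satisfies $\nu_2(i) = L-1$, that is $i \equiv 2^{L-1} \pmod{2^L}$, while a $b$-run has length at least $L+1$ precisely when $2^L \mid i$.

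First I would record the key arithmetic observation: between any two distinct indices $i < i'$ with $\nu_2(i) = \nu_2(i') = L-1$ there lies a multiple of $2^L$. Writing $i = (2k+1)2^{L-1}$, the integer $(k+1)2^L = i + 2^{L-1}$ is a multiple of $2^L$ with $i < i + 2^{L-1} < i'$, since two distinct indices of $2$-adic valuation $L-1$ differ by at least $2^L$ and hence $i' \geq i + 2^L$. Thus strictly between any two $b$-runs of length $L$ in ${\bf z}$ there sits a $b$-run of length at least $L+1$.

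Next I would translate the notion of an interior $b$-run into the language of ${\bf z}$. If $w = {\bf z}[p..q]$ and a $b$-run occurrence is interior to $w$, then it is bounded on both sides by letters of $w$; since the alphabet is $\{a,b\}$ and the run is maximal, these bounding letters must be $a$'s, and they occupy the same positions in ${\bf z}$. Hence every interior $b$-run of $w$ is actually a complete $b$-run of ${\bf z}$ lying strictly inside the interval $[p..q]$.

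Finally, let $L$ be the length of the longest $b$-run occurring in $w$, and suppose toward a contradiction that $w$ has two interior occurrences of a $b$-run of length $L$. By the previous step these are two complete $b$-runs of ${\bf z}$, both of length $L$ and both lying strictly inside $[p..q]$; by the arithmetic observation, a $b$-run of ${\bf z}$ of length at least $L+1$ is positioned strictly between them, and therefore also inside $[p..q]$. Then $w$ contains at least $L+1$ consecutive $b$'s, contradicting the maximality of $L$. I expect the one delicate point to be the bookkeeping in this last step, namely verifying that the longer run genuinely falls inside $w$ rather than being clipped at an endpoint; this follows once the two interior runs are identified with full runs of ${\bf z}$ flanked by $a$'s of $w$, so that everything strictly between them also belongs to $w$. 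The degenerate case $L = 0$, where $w$ is a block of $a$'s with no interior $b$-run at all, is immediate.
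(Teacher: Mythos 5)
Your proof is correct and follows essentially the same route as the paper: both identify interior occurrences of the longest run $b^L$ with full $b$-runs of $\bf z$ indexed by integers of $2$-adic valuation $L-1$ via Theorem~\ref{one}, and then produce a run of length at least $L+1$ strictly between two such occurrences, contradicting maximality. The only cosmetic difference is that the paper picks the two closest occurrences, with indices $2^{L-1}m$ and $2^{L-1}(m+2)$ for $m$ odd, and points to the index $2^{L-1}(m+1)$ between them, whereas you exhibit the multiple of $2^L$ at $i+2^{L-1}$ directly for an arbitrary pair; both rest on the identical arithmetic fact.
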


\begin{proof}
Let $b^n$ be the longest $b$-run of $w$, and suppose $w$ has
at least two interior occurrences of $b^n$.
Choose two such occurrences that are separated by
the smallest number of symbols.
By Theorem~\ref{one} these occurrences
must correspond to $b^{\nu_2(i)+1}$ where 
$i \in \{ 2^{n-1} m, 2^{n-1} (m+2) \}$
for some odd number $m$.
Then in between these two $b$-runs there is a $b$-run corresponding
to $i = 2^{n-1} (m+1)$, which (since $m+1$ is even) is of length at
least $n+1$, contradicting the assumption that $b^n$ was the longest
$b$-run in $w$.
\end{proof}

\begin{corollary}
A longest $b$-run in a factor $w$ can have at most three occurrences.
When it does have three,
the occurrences must be a prefix, suffix, and a single interior
occurrence.  In this case the $b$-run must be of the form
$b^n$ for some $n \geq 1$ and the factor must be
$b^n h^{n+1} (a) b^{-1}$, of length $2^{n+2} + n - 2$.
\label{five}
\end{corollary}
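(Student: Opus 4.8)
The plan is to combine the bound on interior occurrences from the preceding lemma with the arithmetic of $2$-adic valuations supplied by Theorem~\ref{one}. First I would observe that, writing $b^n$ for the longest $b$-run of $w$ (with $n \geq 1$, so that $w$ contains at least one $b$), every occurrence of $b^n$ in $w$ is either a prefix, a suffix, or an interior occurrence, and that the preceding lemma caps the interior occurrences at one. Since there is at most one prefix and at most one suffix occurrence, this immediately gives at most three occurrences, and forces the three-occurrence case to consist of exactly one prefix, one interior, and one suffix occurrence.

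Next I would pin down the positions. Using the factorization ${\bf z} = \prod_{i \geq 1} a\,a\,b^{\nu_2(i)+1}$, an interior occurrence of $b^n$ is a genuine ${\bf z}$-run of length exactly $n$, hence sits at a position $i_0 = 2^{n-1} m$ with $m$ odd. The key structural step is that the prefix and suffix occurrences cannot themselves be full runs of length $n$: a full run $b^n$ to the left of $i_0$ would sit at another odd multiple of $2^{n-1}$, and between two odd multiples of $2^{n-1}$ always lies an even multiple, whose run has length $\geq n+1$ — a longer run inside $w$, contradicting maximality. So each of the prefix and suffix occurrences must be a proper truncation of a ${\bf z}$-run of length $\geq n+1$; such runs live at multiples of $2^n$, say $j_1 < j_2$, with $w$ starting strictly inside run $j_1$ and ending strictly inside run $j_2$.

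Then I would show $j_2 = j_1 + 2^n$ and read off $w$. Since $w$ contains in full every run strictly between $j_1$ and $j_2$, and a run has length $>n$ exactly at multiples of $2^n$, there can be no multiple of $2^n$ strictly between $j_1$ and $j_2$; being consecutive such multiples, they satisfy $j_2 = j_1 + 2^n$, and the unique odd multiple of $2^{n-1}$ in the open interval is $i_0 = j_1 + 2^{n-1}$, recovering the single interior run automatically. Writing $w$ out blockwise gives $w = b^n \bigl(\prod_{i=j_1+1}^{j_2-1} a\,a\,b^{\nu_2(i)+1}\bigr) a\,a\,b^n$, and the identity $\nu_2(2^n s + r) = \nu_2(r)$ for $0 < r < 2^n$ shows the middle product is independent of the choice of $j_1 = 2^n s$ and equals $\prod_{r=1}^{2^n-1} a\,a\,b^{\nu_2(r)+1}$. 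Recognizing from Theorem~\ref{one} that $h^{n+1}(a) = \bigl(\prod_{r=1}^{2^n-1} a\,a\,b^{\nu_2(r)+1}\bigr) a\,a\,b^{n+1}$ then lets me rewrite $w\,b = b^n h^{n+1}(a)$, that is $w = b^n h^{n+1}(a) b^{-1}$, whose length is $n + (2^{n+2}-1) - 1 = 2^{n+2}+n-2$.

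The main obstacle I anticipate is the structural step showing the prefix and suffix runs must be proper truncations rather than full runs of length $n$: this is where the interpolation property of $2$-adic valuations — an even multiple of $2^{n-1}$ always separating two odd ones — does the real work, and where the small case $n=1$ and the requirement $j_1 \geq 2^n$ (so that runs of length $\geq n+1$ are available) need a brief sanity check. The remainder is bookkeeping once the block decomposition of $w$ between the two flanking runs has been identified.
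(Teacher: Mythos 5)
Your proposal is correct. The paper states this corollary with no written proof at all (it is presented as an immediate consequence of the preceding lemma), and your argument supplies exactly the intended reasoning: the lemma caps interior occurrences at one, giving the bound of three, and your extension of the same $2$-adic argument from the lemma's proof (an even multiple of $2^{n-1}$, hence a run of length $\geq n+1$, lies between any two odd multiples) correctly forces the prefix and suffix occurrences to be truncations of runs at consecutive multiples of $2^n$, after which the block decomposition from Theorem~\ref{one} pins down $w = b^n h^{n+1}(a) b^{-1}$ and its length $2^{n+2}+n-2$.
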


\begin{lemma}
If a factor $w$ of $\bf z$ of length $n$ has a $b$-run of length
$> f(n)$, then this run occurs only once in $w$. Furthermore, there is
exactly one such factor $w$ corresponding to the choice of the starting
position of this $b$-run.
\label{lem10}
\end{lemma}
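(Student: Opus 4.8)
The plan is to prove the two assertions separately, in both cases exploiting the factorization ${\bf z} = \prod_{i \geq 1} a\,a\,b^{\nu_2(i)+1}$ of Theorem~\ref{one}, under which a $b$-run of length $m$ occurring in ${\bf z}$ corresponds to a block with index $i$ satisfying $\nu_2(i) = m-1$.

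For the first assertion, suppose $w$ has length $n$ and contains a $b$-run $b^m$ with $m > f(n)$. From the definition of $f$ (and its monotonicity), the inequality $f(n) < m$ forces $n \leq 2^{m+1} + m - 3$. On the other hand, Corollary~\ref{cor7}(c) states that any factor containing two occurrences of a $b$-run of length $m$ has length at least $2^{m+1} + m - 1$. Since $n \leq 2^{m+1}+m-3 < 2^{m+1} + m - 1$, the factor $w$ is simply too short to contain two occurrences of $b^m$, so this run occurs exactly once.

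For the second assertion I would fix the occurrence of $b^m$ coming from the block with index $i = (2\ell+1)2^{m-1}$ for some $\ell \geq 0$, so that $\nu_2(i) = m-1$. The crucial point is a local-invariance property: for every $t$ with $0 < |t| < 2^{m-1}$ one has $\nu_2(i+t) = \nu_2(t)$, because $\nu_2(i) = m-1$ strictly exceeds $\nu_2(t)$. Hence the lengths of all the $b$-runs in the $2^m-1$ blocks centered at block $i$ agree for every occurrence of $b^m$; that is, the word ${\bf z}$, read across this band of blocks, does not depend on $\ell$. Note in particular that every run strictly inside this band has valuation at most $m-2$ and hence length at most $m-1$, so $b^m$ is the longest run fully contained in any window lying inside the band. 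Consequently, once we prescribe the number $p$ of symbols of $w$ lying to the left of the run $b^m$ (equivalently, the starting position of the run inside $w$), the $p$ symbols to its left and the $n-p-m$ symbols to its right are completely determined, and so $w$ is unique.

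To make this rigorous I must check that the window of length $n$ never reaches far enough to see a $b$-run whose length genuinely depends on $\ell$, namely the runs at blocks $i \pm 2^{m-1}$, whose $2$-adic valuations are at least $m$ and do vary with $\ell$. Using the bound $n \leq 2^{m+1}+m-3$ from the first part, together with the explicit length $\sum_{1 \leq t < 2^{m-1}}(\nu_2(t)+3) = 2^{m+1}-m-3$ of a band of $2^{m-1}-1$ consecutive blocks, one computes that the window can protrude into the $b$-run at a block $i \pm 2^{m-1}$ by at most $m-2$ symbols. Since that run has length at least $m+1 > m-2$, the window only ever sees a truncated all-$b$ segment of it, and such a segment of prescribed length is the same regardless of $\ell$; thus the content of $w$ is occurrence-independent, yielding the claimed uniqueness. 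I expect this boundary estimate — showing that the protrusion into the ambiguous blocks is always a strict truncation — to be the main obstacle, since it is precisely where the hypothesis $m > f(n)$ must be used quantitatively.
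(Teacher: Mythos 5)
Your proposal is correct and follows essentially the same route as the paper's proof: the first assertion is the identical play of Corollary~\ref{cor7}(c) against the bound $n \leq 2^{m+1}+m-3$ coming from the definition of $f$, and the second rests on the same local invariance $\nu_2(i+t)=\nu_2(t)$ for $0<|t|<2^{m-1}$ (the paper parameterizes by divisibility by $2^{f(n)}$ rather than your $2^{m-1}$), together with the same device that any protrusion of the window past this band lands in a neighboring long run and thus sees only a truncated all-$b$ segment independent of the occurrence. One small point to tighten: when the run $b^m$ is a prefix or suffix of $w$ it may be a truncation of a longer run of ${\bf z}$, so you must allow $\nu_2(i)\geq m-1$ rather than exactly $m-1$; your argument then goes through verbatim (the adjacent blocks $i\pm 2^{m-1}$ may then carry runs of length exactly $m$, which still exceeds your protrusion bound $m-2$), and this is exactly what the paper's phrasing ``divisible by $2^{f(n)}$'' accommodates.
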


\begin{proof}
First, suppose there were two occurrences of such a run of length
$\geq f(n) + 1$ in $w$.  Then
from Corollary~\ref{cor7} (c), this means that $w$ is of length
at least $2^{f(n) + 2} +  f(n)$.  So $n \geq 2^{f(n)+2} + f(n)$.  
But from the definition of $f$ we have $n \leq 2^{f(n)+2} + f(n) - 2$.
This is a contradiction.

Next, suppose we fix the starting position of a $b$-run of length
$> f(n)$ in $w$.    This $b$-run is either (a) a prefix or suffix of $w$,
or (b) is interior to $w$.  

(a) If this $b$-run is a prefix (resp., suffix) of $w$, it corresponds to a left
(resp., right) edge, divisible by $2^{f(n)}$, of a cover interval.
This fixes the next (resp., previous) $2^{f(n)} -1$ elements of the
cover interval, and so the next (resp., previous)
$|h^{f(n)+1} (a)|$ symbols of $\bf z$ (and hence $w$).  
Thus, including the prefix (resp., suffix), the total number of
symbols determined is of length $f(n) + 1 + 2^{f(n)+2} - 1 =
2^{f(n)+ 2} + f(n)$.   But from the
definition of $f$ we have $n \leq 2^{f(n) + 2} + f(n) -2$.  So all the
symbols of $w$ are determined, and there can only be one such factor.

(b) If this $b$-run is interior to $w$ then, it corresponds to an
element of the cover interval that is exactly divisible by
$2^{f(n)}$.  Then, as in the previous case, the
$2^{f(n)+2} - 1$ symbols both preceding
and following this $b$-run are determined.
Again, this means all the symbols of $w$ are determined, and there
can be only one such factor.

\end{proof}

\begin{corollary}
There are exactly $n-t+1$ factors of $\bf z$ of length $n$ having longest
$b$-run of length $t$, for each $t$ with $f(n) < t \leq n$.
\label{cor11}
\end{corollary}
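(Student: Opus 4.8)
The plan is to exhibit a bijection between the factors of ${\bf z}$ of length $n$ whose longest $b$-run has length exactly $t$ and the set of positions $\{0,1,\ldots,n-t\}$ at which a block of length $t$ can sit inside a window of length $n$; since this set has $n-t+1$ elements, the count follows. The map sends such a factor $w$ to the starting position $p$ of its longest $b$-run. This is well defined because $t > f(n)$: by the first part of Lemma~\ref{lem10} a run of length $> f(n)$ occurs only once in $w$, so the longest run is the unique maximal block of that length and its starting position is unambiguous.

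For injectivity (the bound $\le n-t+1$) I would invoke the second half of Lemma~\ref{lem10}: once the starting position of a run of length $>f(n)$ is fixed, the entire factor is determined. Hence two factors with longest run $t$ placing that run at the same position $p$ must coincide, so $w\mapsto p$ is injective and there are at most $n-t+1$ such factors.

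The substance is surjectivity: for every $p\in\{0,\ldots,n-t\}$ one must produce an honest factor of ${\bf z}$ of length $n$ with longest $b$-run exactly $t$ starting at position $p$. I would fix an interior occurrence of a maximal run $b^t$, at an index $j^\ast=(2m+1)2^{t-1}$ with $m\ge 1$, so that $\nu_2(j^\ast)=t-1$. By Theorem~\ref{one} this run is flanked, at index-distance $2^{t-1}$ on each side, by runs of length $\ge t+1$ (at the indices $m2^t$ and $(m+1)2^t$), while the only run of length $\ge t$ strictly between them is $b^t$ itself. Using the block lengths $3+\nu_2(k)$ together with the standard computation $\sum_{i=1}^{2^{t-1}-1}\nu_2(i)=2^{t-1}-t$, one finds that the ``clean'' stretch of runs of length $\le t-1$ adjacent to $b^t$ has length $2^{t+1}-t-1$ on each side. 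Allowing the window additionally to eat up to $t-1$ of the $b$'s of a flanking run as a truncated prefix or suffix (still a run of length $<t$, hence harmless) yields $2^{t+1}-2$ usable symbols on each side of the central $b^t$, whose own two flanking $a$'s keep it a genuine maximal run of length exactly $t$.

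To realize position $p$ I use $p$ symbols to the left and $n-t-p$ to the right, which is feasible as soon as $p\le 2^{t+1}-2$ and $n-t-p\le 2^{t+1}-2$; both follow from $n-t\le 2^{t+1}-2$. This is where the hypothesis $t>f(n)$ is spent: from $t\ge f(n)+1$ and $n\le 2^{f(n)+2}+f(n)-2$ one gets $n-t\le 2^{f(n)+2}-3\le 2^{t+1}-3$. The resulting window is a contiguous factor of ${\bf z}$ with longest run exactly $t$ at position $p$, completing the bijection and giving exactly $n-t+1$ factors. I expect the surjectivity step to be the main obstacle, specifically the boundary bookkeeping: for $p$ near $0$ or near $n-t$ the length-$t$ run is realized only by truncating a longer flanking run, and one must check that such a truncation never secretly creates a second run of length $\ge t$. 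The inequality $n<2^{t+1}+t-1$, equivalently Corollary~\ref{cor7}(c), rules this out, since a window of length $n$ is too short to contain two runs of length $t$; and one checks that at most one side ever needs truncation, the required amount never exceeding $t-1$.
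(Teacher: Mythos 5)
Your proof is correct and takes essentially the same approach as the paper: the paper's own proof of Corollary~\ref{cor11} is precisely the bijection with run-positions, invoking Lemma~\ref{lem10} both for the uniqueness of the long run inside a factor (well-definedness) and for the uniqueness of the factor once the run's starting position is fixed (injectivity), and then counting the $n-t+1$ positions. Your explicit surjectivity construction --- centering the window on an interior maximal run $b^t$ at index $(2m+1)2^{t-1}$ and checking $n-t \le 2^{t+1}-2$ via $t > f(n)$ --- is accurate, and fills in an existence step that the paper absorbs into the phrase ``there is a unique such factor having a $b$-run of length $t$ at every possible position.''
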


\begin{proof}
If $t > f(n)$, then from Lemma~\ref{lem10} we know there is exactly
one $b$-run of length $t$ in every factor of length $n$.
Furthermore, there is a unique such factor having a $b$-run of
length $t$ at every possible position, and there are $n-t+1$ possible
positions.  
\end{proof}

The preceding corollary counts all length-$n$ factors having longest $b$-run
of length $> f(n)$.  It remains to count those factors having longest
$b$-run of length equal to $f(n)$.

\begin{definition}
Let the function $g$ be defined as follows:
$$ g(n) = \begin{cases}
      2^t - 1, & \text{if } 2^t + t-3 \leq n \leq 2^t + t-1; \\
2^{t+1}+t-2-n, & \text{if } 2^t + t-1 \leq n \leq 2^{t+1} + t-3 .
\end{cases}
$$
The first few values of the function $g$ are given in Table~\ref{tab3}.

\begin{table}[H]
\begin{center}
\begin{tabular}{c|ccccccccccccccccccccccc}
$n$ &    1 & 2 & 3 & 4 & 5 & 6 & 7 & 8 & 9 & 10 & 11 & 12 & 13 & 14 & 15 & 16 & 17 & 18 & 19 & 20 \\
\hline
$g(n)$ & 1 & 1 & 3 & 3 & 3 & 2 & 1 & 7 & 7 &  7 &  6 &  5 &  4 &  3 &  2 & 1 & 15 & 15 & 15 & 14 \\
\end{tabular}
\end{center}
\caption{Values of the function $g$}
\label{tab3}
\end{table}
\end{definition}

\begin{lemma}
Let $n \geq 1$.  The word $\bf z$ has exactly
$g(n)$ distinct length-$n$ factors with longest $b$-run of length $m = f(n)$.
\label{lem13}
\end{lemma}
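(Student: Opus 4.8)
The plan is to reduce the count to the subword complexity of a single explicit almost-periodic word. Write $m = f(n)$ throughout. The first step is to isolate the finite factor of ${\bf z}$ inside which every length-$n$ factor with longest $b$-run equal to $m$ must live. By the factorization of Theorem~\ref{one}, a $b$-run of length $> m$ occurs exactly at the blocks $aab^{\nu_2(i)+1}$ with $2^m \mid i$, while a run of length exactly $m$ occurs exactly when $i$ is an odd multiple of $2^{m-1}$; hence between two consecutive runs of length $> m$ (at blocks $t2^m$ and $(t+1)2^m$) there sits exactly one run of length $m$, and the intervening content $\prod_{s=1}^{2^m-1} aab^{\nu_2(s)+1}$ is the same for every $t$ because $\nu_2(t2^m+s)=\nu_2(s)$ for $1\le s<2^m$. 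A factor with longest run $= m$ contains no full run of length $> m$, and may borrow at most $m$ boundary $b$'s from each flanking long run, so it is a subword of the fixed word $F := b^m\,h^{m+1}(a)\,b^{-1}$ (with the start-of-${\bf z}$ configuration, which lacks a left flanking run, giving only a suffix of $F$). This $F$ is itself a factor of ${\bf z}$ by Corollary~\ref{five} with the role of $n$ played by $m$, and $|F| = 2^{m+2}+m-2$. Conversely, since $f(n)=m$ forces every length-$n$ factor to contain a run of length $\ge m$, while every run of $F$ has length $\le m$, each length-$n$ subword of $F$ is a factor of ${\bf z}$ with longest run exactly $m$. Therefore $g(n)$ equals the number of distinct length-$n$ subwords of $F$.

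The second step is to recognize $F$ as almost purely periodic. Using $h^{m+1}(a)=(h^m(a))^2 b$ we get $F = b^m (h^m(a))^2$; writing $u=h^m(a)=vb^m$ (so $v$ is $u$ with its terminal run $b^m$ deleted) and $P=b^m v$, this rearranges to $F = P^2 b^m$, a prefix of $P^\omega$ of length $2\,|P|+m$ with $|P| = |u| = 2^{m+1}-1 =: p$. The key structural fact is that $P$ is primitive: since $v$ is the concatenation of the blocks $aab^{\nu_2(s)+1}$ for $1\le s<2^{m-1}$, all of whose runs have length $< m$, the word $P=b^m v$ has a unique $b$-run of length $m$, namely its own prefix, so $P$ cannot be a proper power.

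The final step is a residue count. Because $P$ is primitive, for every length $\ell \ge p$ the factors of $P^\omega$ of length $\ell$ are exactly the $p$ distinct words read from the $p$ cyclic starting positions, and two length-$\ell$ windows of $P^\omega$ coincide if and only if their starting positions are congruent modulo $p$. Throughout the relevant range one has $n \ge 2^{m+1}+m-2 = p+m-1 \ge p$, so this applies to the prefix $F$: the distinct length-$n$ subwords of $F$ are in bijection with the distinct residues modulo $p$ among the window starts $0,1,\dots,|F|-n$. When $|F|-n+1 \ge p$, i.e. $n \le 2^{m+1}+m$, these $|F|-n+1$ consecutive integers already meet all $p$ residues, giving $g(n)=p=2^{m+1}-1$; when $|F|-n+1 < p$, i.e. $n > 2^{m+1}+m$, the starts form fewer than $p$ consecutive integers, hence are pairwise incongruent, giving $g(n)=|F|-n+1=2^{m+2}+m-1-n$. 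Putting $t=m+1$ these are precisely the two cases in the definition of $g$.

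I expect the main obstacle to be the first step: one must argue carefully that the factors in question are exactly the length-$n$ subwords of the single word $F$, which requires pinning down how many boundary $b$'s may be borrowed from each flanking long run and checking the degenerate configuration at the very start of ${\bf z}$. Once the reduction to $F=P^2b^m$ is in hand, the primitivity of $P$ and the residue count are routine.
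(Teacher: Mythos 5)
Your proof is correct, and although it begins exactly as the paper's does --- both arguments reduce the count to the number of distinct length-$n$ subwords of the single explicit word $F = b^m h^{m+1}(a)\, b^{-1} = b^m h^m(a) h^m(a)$ --- you finish by a genuinely different route. The paper counts the $|F|-n+1$ window positions and then handles coincidences ad hoc: for $n = 2^{m+1}+m-2$ it checks that the windows at the last two positions repeat the first two, for $n = 2^{m+1}+m-1$ that the last repeats the first, and for larger $n$ it asserts distinctness ``as can be verified by identifying the position of the first occurrence of $b^m$.'' You instead exhibit the periodic structure $F = P^2 b^m$, a prefix of $P^\omega$ with $P = b^m v$ (where $h^m(a) = v b^m$) of length $p = 2^{m+1}-1$, prove $P$ primitive from the fact that its only $b$-run of length $\geq m$ is its prefix, and then count residues modulo $p$ among the window starts; since $n \geq p$ whenever $f(n)=m$, this yields $g(n) = \min(|F|-n+1,\, p)$ uniformly. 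Your version buys rigor and transparency: the paper's two special coincidences are precisely the congruences $p \equiv 0$ and $p+1 \equiv 1 \pmod{p}$, and the distinctness assertion the paper leaves to the reader is exactly what primitivity guarantees. Two small points to fix in a final write-up: your parenthetical description of $v$ as the concatenation of the blocks $aab^{\nu_2(s)+1}$ for $1 \leq s < 2^{m-1}$ omits the trailing $aa$ coming from block $2^{m-1}$ (your actual definition $h^m(a) = vb^m$ is the correct one, and it is what the argument uses --- note that if $v$ really ended in $b^{m-1}$, the junctions in $P^2$ would create runs longer than $m$); and the degenerate case $m=0$, where $P=a$ and statements about runs of length $m$ and odd multiples of $2^{m-1}$ become vacuous, should be disposed of separately, as it is trivial.
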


\begin{proof}
Let $w$ be a factor of length $n$ of $\bf z$.
If the longest $b$-run of $w$ is of length $m = f(n)$, then
from Corollary~\ref{five} we
know that $w$ itself is a factor of $b^m h^{m+1} (a) b^{-1} = 
	b^m h^m (a) h^m (a)$.
Now $b^m h^m (a) h^m (a)$ is of length $2^{m+2} + m - 2$, so there
are at most $2^{m+2} + m - 1 - n$ positions at which such a factor
could begin.  If $n = 2^{m+1} +m - 2$, then it is easy
to check that the factors of length $n$
starting at the last two possible positions are the same as the first
two; they are both $b^m h^m (a) b^{-1}$ and $b^{m-1} h^m (a)$.
If $n = 2^{m+1} + m-1$, then the factor of length $n$
starting at the last possible position is the same as the first;
they are both $b^m h^m (a)$.  
Otherwise, in these cases and when $n \leq 2^{m+2} + m - 2$, all
the factors are distinct (as can be verified by identifying the
position of the first occurrence of $b^m$).
This gives the result.
\end{proof}

We are now ready to prove Theorem~\ref{thm1}.

\begin{proof}
Totalling the factors described in Corollary~\ref{cor11} and
Lemma~\ref{lem13}, we see that
$$\rho_{\bf z} (n) =  g(n) + \sum_{f(n) < t \leq n} (n-t+1) .$$
We now claim that the right-hand-side equals
$\sum_{0 \leq i \leq n} \min(2^i, n-i+1)$.
To see this, note that for $n = 2^j + j-3$ and $n = 2^j + j-2$
we have $g(f(n)) = 2^j - 1$, while for
$2^j + j - 1  \leq n < 2^{j+1} + j-2$ we have
$g(f(n)) + g(f(n)+1) = 2^{j+1} - 1$.  
\end{proof}

\begin{corollary}
The first difference of the subword complexity of $\bf z$ is
$$\prod_{i \geq 0} [2^i..2^{i+1}] = (1,2,2,3,4,4,5,6,7,8,8,9,\ldots) .$$
\end{corollary}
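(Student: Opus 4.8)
The plan is to reduce everything to the closed form of Theorem~\ref{thm1} and compute the first difference $D(n) := \rho_{\bf z}(n+1) - \rho_{\bf z}(n)$ directly. Writing both quantities as sums and separating out the lone extra term $\min(2^{n+1},1)=1$ coming from the index $i=n+1$, I would compare the remaining terms indexwise: for $0 \le i \le n$ the contribution of index $i$ to $D(n)$ is $\min(2^i, n-i+2) - \min(2^i, n-i+1)$. Since $n-i+1$ and $n-i+2$ are consecutive integers (no integer lies strictly between them), this contribution equals $1$ when $2^i \ge n-i+2$ and $0$ otherwise. This yields the compact formula
$$ D(n) = 1 + \#\{\, i : 0 \le i \le n, \ 2^i + i \ge n+2 \,\}. $$

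Next I would exploit the monotonicity of the map $i \mapsto 2^i + i$. Because it is strictly increasing, the indices $i$ satisfying $2^i + i \ge n+2$ are exactly those in a final segment $\{i_0, i_0+1, \ldots, n\}$, where $i_0 = i_0(n)$ is the least integer with $2^{i_0}+i_0 \ge n+2$; hence $D(n) = n - i_0(n) + 2$. The crux is then to locate $i_0(n)$. I would partition $\Enn$ into the consecutive blocks $B_k = \{\, n : 2^k + k - 1 \le n \le 2^{k+1}+k-1 \,\}$ for $k \ge 0$, observing that these tile $\Enn$ because the right endpoint $2^{k+1}+k-1$ of $B_k$ is immediately followed by the left endpoint $2^{k+1}+k$ of $B_{k+1}$. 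For $n \in B_k$ one checks the two inequalities $2^{k+1}+(k+1) \ge n+2$ and $2^k + k < n+2$, which are precisely the right and left endpoint conditions defining $B_k$; together they force $i_0(n) = k+1$.

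Substituting gives $D(n) = n - k + 1$ for every $n \in B_k$. As $n$ runs through the $2^k+1$ integers of $B_k$, the value $n-k+1$ runs through the consecutive integers $2^k, 2^k+1, \ldots, 2^{k+1}$; that is, $D$ restricted to $B_k$ is exactly the block $[2^k..2^{k+1}]$. Concatenating over $k \ge 0$ then produces $\prod_{i \ge 0} [2^i..2^{i+1}]$, as claimed. I expect the only delicate point to be the bookkeeping at the block boundaries --- verifying that the endpoint $n = 2^{k+1}+k-1$ is assigned to $B_k$ rather than $B_{k+1}$, and that the count $n - i_0(n) + 1 = n-k$ stays nonnegative throughout, which holds since $n \ge 2^k + k - 1 \ge k$ on $B_k$. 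The rest is a routine monotonicity argument.
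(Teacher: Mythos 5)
Your proposal is correct, and it follows the same route the paper intends: the paper states this corollary without proof as an immediate consequence of Theorem~\ref{thm1}, and your argument is exactly that computation (termwise differencing of the closed-form sum, then locating the threshold index via monotonicity of $i \mapsto 2^i+i$) carried out in full detail, with the block decomposition and endpoint bookkeeping handled correctly.
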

\noindent This is sequence \seqnum{A103354} in Sloane's {\it On-Line Encyclopedia
of Integer Sequences} \cite{Sloane}.

We can also recover a result of Firicel \cite{Firicel:2010,Firicel:2011}:

\begin{corollary}
There are ${{n^2} \over 2} - n \log_2 n + O(n)$ distinct factors
of length $n$ in ${\bf z}_2$.
\end{corollary}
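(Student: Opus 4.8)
The plan is to start from the exact formula of Theorem~\ref{thm1}, namely $\rho_{\bf z}(n) = \sum_{0 \leq i \leq n} \min(2^i, n-i+1)$, and extract its asymptotics by splitting the sum at the point where the two quantities inside the minimum cross over. Let $m = m(n)$ be the largest integer $i$ for which $2^i \leq n - i + 1$. For $i \leq m$ the summand equals $2^i$, and for $i > m$ it equals $n - i + 1$, so
$$\rho_{\bf z}(n) = \sum_{i=0}^{m} 2^i + \sum_{i=m+1}^{n} (n-i+1).$$

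First I would dispose of the geometric part. Since $2^m \leq n - m + 1 \leq n+1$, the first sum is $2^{m+1} - 1 \leq 2(n+1) = O(n)$, so it contributes only to the error term. The second sum is a triangular number: reindexing by $j = n - i + 1$ gives $\sum_{j=1}^{n-m} j = \frac{(n-m)(n-m+1)}{2}$, which carries the main growth. Expanding, $\frac{(n-m)(n-m+1)}{2} = \frac{n^2}{2} - nm + O(m^2) + O(n)$, so the whole estimate reduces to pinning down the size of $m$.

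The main point is therefore to show that $m = \log_2 n + O(1)$. By definition $2^m \leq n - m + 1 \leq n + 1$, so $m \leq \log_2(n+1)$; and by the maximality of $m$ we have $2^{m+1} > n - m \geq n/2$ for $n$ large, whence $m + 1 > \log_2(n/2) = \log_2 n - 1$. Together these give $m = \log_2 n + O(1)$ (in fact $m = \log_2 n + o(1)$, but $O(1)$ suffices). Substituting $m = \log_2 n + O(1)$ into $\frac{n^2}{2} - nm + O(m^2)$ yields $\frac{n^2}{2} - n\log_2 n + O(n)$, since $n \cdot O(1) = O(n)$ and $O(m^2) = O((\log n)^2) = O(n)$. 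Combining with the $O(n)$ contribution of the geometric part gives the claimed estimate.

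I expect the only delicate step to be the clean handling of the crossover index $m$: it is tempting to replace $m$ by $\log_2 n$ directly, but one must check that the discrepancy $m - \log_2 n$, once multiplied by $n$, stays within the $O(n)$ error, and that the lower-order $(\log n)^2$ term arising from the triangular number is likewise absorbed. Since neither introduces anything worse than $O(n)$, the bound follows.
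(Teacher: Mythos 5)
Your proof is correct and takes essentially the same route as the paper: the paper treats this corollary as an immediate consequence of Theorem~\ref{thm1}, and its later remark on the Lambert $W$ function records precisely your decomposition into a geometric part $2^{m+1}-1$ plus a triangular part $\frac{(n-m)(n-m+1)}{2}$ at the crossover index $m$. One harmless slip: your parenthetical claim that $m = \log_2 n + o(1)$ is false (since $m$ is an integer, $m - \log_2 n$ oscillates in roughly the interval $(-1,0]$ rather than tending to $0$), but as you only use the bound $m = \log_2 n + O(1)$, the argument stands.
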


\begin{remark}
This estimate
was used by Firicel to prove that
${\bf z}$ is not $k$-automatic for any $k \geq 2$.
(The proof in \cite{Allouche&Betrema&Shallit:1989} proved this only
for $k =2$.)
\end{remark}

\begin{remark}
Recall that the (principal branch of the) Lambert function $W$ is defined
for $x \geq -1/e$ by $y = W(x)$ if and only if $x = y e^y$.
Then, for $i \in [0,n]$, we have $2^i \leq n-i+1$ if and only if $i
\leq n+1-W( (\log 2) 2^{n+1})/(\log 2)$. Thus, defining the integer $m$ by
$m:= \lfloor n+1-W( (\log 2) 2^{n+1})/(\log 2)\rfloor$, we get
$$
\rho_{{\bf z}_2}(n) = (2(m+1) - 1) + \frac{(n-m)(n-m+1)}{2} .$$ 
This confirms M. F. Hasler's conjecture about sequence \seqnum{A006697} in 
Sloane's {\it On-Line Encyclopedia of Integer Sequences} \cite{Sloane}.

We also can confirm the conjecture of
V. Jovovic from September 19 2005 that $\bf z$ is
the partial summation of Sloane's sequence
\seqnum{A103354}, and is also equal to $\seqnum{A094913}(n) + 1$.
\end{remark}

\section{The first generalization}

The first and most obvious generalization of the morphism $h$ is
to $h_q$ for $q \geq 2$, where $a \rightarrow a^q b$ and $b \rightarrow b$.
Then $h = h_2$.    Let the fixed point of $h_q$ be
${\bf z}_q = z_q (0) z_q (1) z_q(2) \cdots $.  
Then $z_q(n) = a$ if and only if $n$ has a representation
using the digits $0, 1, \ldots, q-1$ in the system of
Cameron and Wood \cite{Cameron&Wood:1993} using the system of
weights $(q^i-1)/(q-1)$.

\begin{theorem}
For $q \geq 2$ the subword complexity of ${\bf z}_q$ is
$\sum_{0 \leq i \leq n} \min(q^i, n-i+1)$.
\end{theorem}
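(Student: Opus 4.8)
The plan is to follow Section~2 almost verbatim, replacing $2$ by $q$, but to reorganize the final count around the crossover index so as to sidestep the one step of the $q=2$ argument that genuinely fails for $q\ge 3$. First I would establish the analogue of Theorem~\ref{one},
$$ {\bf z}_q = \prod_{i\ge 1} a^q\, b^{\nu_q(i)+1}, $$
by checking that the right-hand side begins with $a$ and is fixed by $h_q$: since $h_q(a^q b^e)=(a^q b)^q b^e=(a^q b)^{q-1}a^q b^{e+1}$, the run-length sequence of $h_q(\cdot)$ agrees with that of the right-hand side via $\nu_q(qi)=\nu_q(i)+1$ and $\nu_q(j)=0$ for $q\nmid j$. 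From this factorization all the basic data generalize: writing $L_n:=|h_q^n(a)|=(q^{n+1}-1)/(q-1)$, the $b$-run after the $i$-th block has length $\nu_q(i)+1$, a $b$-run of length $\ge m$ forces $q^{m-1}\mid i$, and $h_q^{k+1}(a)=\bigl(h_q^k(a)\bigr)^q b$.

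Next I would rerun the forcing lemmas. The proofs of the lemma preceding Definition~4, of Corollary~\ref{cor7}, and of Lemma~\ref{lem10} use only the data above, so they carry over with $2$ replaced by $q$; in particular the shortest factor of ${\bf z}_q$ containing two occurrences of a $b$-run of length $t$ has length $L_t+t$, whence whenever $n<L_t+t$ a length-$n$ factor has at most one $b$-run of length $t$, and fixing its position determines the factor. Let $m^*:=\max\{\,i:q^i\le n-i+1\,\}$; since $i\mapsto q^i+i-1$ is increasing this set is $\{0,\dots,m^*\}$, so for $m^*<t\le n$ we have $n<q^t+t-1<L_t+t$. Hence for each such $t$ there are exactly $n-t+1$ length-$n$ factors whose longest $b$-run has length $t$ (as in Corollary~\ref{cor11}), and these ``deep'' factors contribute $\sum_{m^*<t\le n}(n-t+1)$.

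It remains to count the ``shallow'' factors, those whose longest $b$-run has length $\le m^*$, and here I would depart from the $q=2$ proof: the statement that the longest run has at most one interior occurrence (the lemma before Corollary~\ref{five}) is \emph{false} for $q\ge 3$, so the explicit function $g$ of Lemma~\ref{lem13} has no clean analogue. Instead I would argue by periodicity. As in Corollary~\ref{five}, every shallow factor is a factor of $B:=b^{m^*}h_q^{m^*+1}(a)\,b^{-1}$, and conversely, since by self-similarity all windows between consecutive deep runs are identical, every length-$n$ factor of $B$ is shallow. Using $h_q^{m^*+1}(a)=\bigl(h_q^{m^*}(a)\bigr)^q b$ and that $h_q^{m^*}(a)$ ends in $b^{m^*}$ and begins with $a^q$, one rewrites $B=R^q\,b^{m^*}$ where $R$ has length $L_{m^*}$ and exactly one $b$-run of length $m^*$ (its prefix), all other runs being shorter; thus $R$ is primitive and $B$ is a factor of the purely periodic word $R^\omega$, which has at most $L_{m^*}$ distinct factors of each fixed length.

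Finally I would verify the two length bounds that make this count exact. The definition of $m^*$ gives $n\le q^{m^*+1}+m^*-1$, whence $|B|=qL_{m^*}+m^*\ge n+L_{m^*}-1$, so $B$ is long enough to display all $L_{m^*}$ cyclic windows of length $n$; and it gives $n\ge q^{m^*}+m^*-1$, which I expect to exceed the length of the longest repeated factor of $R^\omega$, forcing those windows to be pairwise distinct. Granting this, there are exactly $L_{m^*}=\sum_{0\le i\le m^*}q^i$ shallow factors, and adding the two contributions yields $\rho_{{\bf z}_q}(n)=\sum_{0\le i\le m^*}q^i+\sum_{m^*<t\le n}(n-t+1)=\sum_{0\le i\le n}\min(q^i,n-i+1)$, since $\min(q^i,n-i+1)$ equals $q^i$ for $i\le m^*$ and $n-i+1$ for $i>m^*$. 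The main obstacle is precisely the distinctness claim in the shallow count: for $q\ge 3$ the longest run can recur up to $q+1$ times inside $B$, so the tidy $q=2$ uniqueness argument is unavailable and must be replaced by a proof that the longest repeat in $R^\omega$ has length $q^{m^*}-1$ (equivalently, that no two of the $L_{m^*}$ windows of length $n\ge q^{m^*}+m^*-1$ coincide); establishing that repeat length is where the real work lies.
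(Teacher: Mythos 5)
Your diagnosis of why the paper's one-line proof (``Exactly the same as for $q=2$'') cannot be taken literally is correct and is the most valuable observation in your write-up: for $q\ge 3$ two consecutive block indices $i,i+1$ can both be prime to $q$, so the lemma preceding Corollary~\ref{five} fails (in ${\bf z}_3$ the factor $aba^3ba$ has two interior occurrences of its longest run $b$), and with it the ``at most three occurrences'' statement. Your repair is also structurally sound: the split into deep factors (longest run $>m^*$), counted by $n-t+1$ exactly as in Lemma~\ref{lem10} and Corollary~\ref{cor11} because $n<q^t+t-1\le L_t+t$, and shallow factors, identified with the length-$n$ factors of $B=b^{m^*}h_q^{m^*+1}(a)b^{-1}=R^qb^{m^*}$ and hence with cyclic windows of the periodic word $R^\omega$; and the closing arithmetic matching $\sum_{0\le i\le n}\min(q^i,n-i+1)$ is right. (One presentational caveat: the containment of every shallow factor in $B$ should be justified by the cover/edge argument of Lemma~\ref{lem10}, since you cannot cite Corollary~\ref{five}, whose proof you have just shown to break down for $q\ge 3$; the statement itself is true.)

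However, the proof is not complete, because the one claim on which everything now rests --- that the $L_{m^*}$ windows of length $n\ge q^{m^*}+m^*-1$ of $R^\omega$ are pairwise distinct --- is exactly what you leave unproved, and the quantitative form in which you propose to prove it (``the longest repeat in $R^\omega$ has length $q^{m^*}-1$'') is false. Take $q=3$, $m^*=2$, so $R=b^2h_3^2(a)b^{-2}=bbaaabaaabaaa$ of length $13$: the word $baaabaaab$, of length $9>q^{m^*}-1=8$, occurs in $R^\omega$ at positions $1$ and $5$, which are incongruent modulo $13$. (The same phenomenon occurs for $q=2$, $m^*=2$: $R=bbaabaa$ and $baab$ occurs at positions $1$ and $4$.) In general the word $b^{m^*-1}(h_q^{m^*-1}(a))^{q-1}$, of length $q^{m^*}+m^*-2$, is a repeated factor of $R^\omega$: it occurs once starting inside the $b^{m^*}$-run and once ending inside the next one. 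So the sharp statement you need is that the longest repeat has length exactly $q^{m^*}+m^*-2$ --- one letter short of the smallest admissible $n$, so your approach survives, but only just --- and proving that sharp bound is precisely the counting work that Lemma~\ref{lem13} (the function $g$, with its carefully identified boundary coincidences) performs in the $q=2$ case. Until that is done, your argument establishes only the upper bound $\rho_{{\bf z}_q}(n)\le\sum_{0\le i\le n}\min(q^i,n-i+1)$, not the stated equality.
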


\begin{proof}
Exactly the same as for $q = 2$.
\end{proof}

\begin{remark}
This result was conjectured in a 1997 email discussion between
the second author and Lambros Lambrou.
\end{remark}

\begin{corollary}
The first difference of the subword complexity of ${\bf z}_q$ is
$$ \prod_{i \geq 0} [q^i..q^{i+1}] =
(1,2,\ldots, q-1, q, q, q+1, \ldots, q^2-1, q^2, q^2, q^2+1, \ldots) .$$
\end{corollary}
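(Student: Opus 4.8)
The plan is to compute the first difference $D(n) := \rho_{{\bf z}_q}(n) - \rho_{{\bf z}_q}(n-1)$ directly from the closed form $\rho_{{\bf z}_q}(n) = \sum_{0 \le i \le n} \min(q^i, n-i+1)$ proved in the preceding theorem, and then match the resulting expression, position by position, against the concatenated sequence $\prod_{i \ge 0}[q^i..q^{i+1}]$.

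First I would examine each summand $\min(q^i, n-i+1)$ as a function of $n$ for fixed $i$: it increases by exactly $1$ each time $n$ grows by one, until it saturates at the constant value $q^i$. For integers this dichotomy is sharp, namely $\min(q^i, n-i+1) - \min(q^i, n-i)$ equals $1$ precisely when $q^i \ge n-i+1$ and equals $0$ otherwise. Summing these increments over $0 \le i \le n-1$ and absorbing the extra top term $\min(q^n,1)=1$ that appears in $\rho_{{\bf z}_q}(n)$ (and which we may fold in, since $q^n \ge 1$ always), the sum telescopes to
$$ D(n) = \#\{\, i : 0 \le i \le n,\ q^i \ge n-i+1 \,\}. $$
Because $q^i + i$ is strictly increasing in $i$, the condition $q^i \ge n-i+1$, equivalently $q^i + i - 1 \ge n$, cuts out a final segment of indices, so $D(n) = n - i^\ast + 1$, where $i^\ast := \min\{\, i \ge 0 : q^i + i - 1 \ge n \,\}$; note $i^\ast \le n$ since $q^n + n - 1 \ge n$.

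Next I would unwind the right-hand side. Setting $i := i^\ast - 1$, the minimality defining $i^\ast$ is equivalent to the pair of inequalities $q^i + i \le n \le q^{i+1} + i$, and these say exactly that the $n$-th term of $\prod_{i \ge 0}[q^i..q^{i+1}]$ falls in the block $[q^i .. q^{i+1}]$. A short bookkeeping step then shows that block $i$ occupies positions $q^i + i$ through $q^{i+1} + i$, that consecutive blocks abut (the shared value $q^{i+1}$ being realized at the last position of block $i$ and the first position of block $i+1$, which is why each power of $q$ appears twice in a row), and that the value sitting at position $n$ inside block $i$ is $n - i$. Comparing this with $D(n) = n - i^\ast + 1 = n - i$ completes the identification and proves the corollary.

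I expect the only delicate point to be this boundary bookkeeping: pinning down the starting and ending positions of each block so that the doubled powers of $q$ and the various off-by-one shifts align correctly, since the telescoping of the minima and the monotonicity of $q^i+i$ are entirely routine. Indeed, as with the subword-complexity formula itself, the whole argument is the one already carried out for $q=2$ in the earlier corollary, with $2$ replaced by $q$ throughout.
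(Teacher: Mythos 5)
Your proposal is correct, and it takes the route the paper intends: the paper states this corollary without proof, treating it as an immediate consequence of the closed form $\rho_{{\bf z}_q}(n)=\sum_{0\le i\le n}\min(q^i,n-i+1)$, and your computation (differencing each summand to get the indicator of $q^i\ge n-i+1$, counting the final segment of indices via monotonicity of $q^i+i$, and matching block positions $[q^i+i,\,q^{i+1}+i]$ with value $n-i$) is exactly the routine verification left implicit there.
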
 
\section{The second generalization}

The classical $q$-ary numeration system represents every non-negative
integer, in a unique way, as sums of the form $\sum_{i \geq 0} a_i
q^i$, where $a_i \in \{ 0, 1, \ldots, q-1 \}$ and only finitely many of
the $a_i$ are nonzero.  In this section, we consider a variation of this
numeration system, where $q^i$ is replaced by $q^i - 1$ and the digit
set is restricted to $\{ 0, 1 \}$.  Of course, in the resulting system,
not every non-negative integer has a representation, so we can consider
the characteristic word ${\bf x}_q = x_q (0) x_q (1)  x_q (2) \cdots$ where
$x_q (i)$ is $1$ if $i$ has a representation and $0$ otherwise.

Note that, if $q$ is a prime power, the infinite word ${\bf x}_q$ is
related to the Carlitz formal power series 
$$ 
\Pi := \prod_{j \geq 1} \left( 1 - 
\frac{X^{q^j} - X}{X^{q^{j+1}} - X} \right) \in {\mathbb F}_q[[X^{-1}]].
$$ 
(see \cite{Allouche:1990} and the references therein).

First, we show how to represent the characteristic sequence
${\bf x}_q$ as the image of a fixed point of a morphism:

\begin{theorem}
Let $q \geq 2$, and
let ${\bf x}_q = x_q (0) x_q (1) x_q (2) \cdots $ be the characteristic
word of those integers having a representation of the form
$\sum_{i \geq 1} \epsilon_i (q^i - 1)$, where $\epsilon_i \in \{ 0, 1 \}$.
Then ${\bf x}_q$ is the coding, under the map
$\tau (a) = 1$ and $\tau(b) = \tau(c) = 0$, of the fixed point of the
morphism
\begin{align*}
a &\rightarrow a b^{q-2} a c^{q(q-2)} b \\
b & \rightarrow b \\
c & \rightarrow c^q .
\end{align*}
\end{theorem}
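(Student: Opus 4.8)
The plan is to track the positions of the letter $a$ in the fixed point ${\bf w} = \phi^\omega(a)$ of the displayed morphism $\phi$, and to show that these positions are exactly the representable integers $\{\sum_{i\in S}(q^i-1) : S\subseteq\{1,2,\ldots\}\text{ finite}\}$. Since $\tau$ sends $a$ to $1$ and both $b,c$ to $0$, this will give $\tau({\bf w})={\bf x}_q$ at once. Write $A_n=\phi^n(a)$. Because $\phi(b)=b$ and $\phi(c)=c^q$, applying the morphism $\phi^{n-1}$ to $\phi(a)=a\,b^{q-2}\,a\,c^{q(q-2)}\,b$ and distributing over the concatenation gives the recursive factorization
$$ A_n = A_{n-1}\, b^{q-2}\, A_{n-1}\, c^{q^n(q-2)}\, b \qquad(n\geq1),$$
with $A_0=a$. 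The two copies of $A_{n-1}$ carry all of the $a$'s, while the blocks $b^{q-2}$, $c^{q^n(q-2)}$ and the trailing $b$ carry none. Moreover each $A_n$ is a prefix of $A_{n+1}$, hence a prefix of ${\bf w}$.

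Set $R_n=\{\sum_{i\in S}(q^i-1): S\subseteq\{1,\ldots,n\}\}$ and $L_n=|A_n|$. I will prove by induction on $n$ that the set of positions of $a$ in $A_n$ equals $R_n$, and that $R_n$ is precisely the set of representable integers in $[0,L_n)$. The crux is a length computation: solving $L_n=2L_{n-1}+(q-1)+q^n(q-2)$ with $L_0=1$ yields $L_n=q^{n+1}-q+1$, and therefore the key identity
$$ L_{n-1}+(q-2) = q^n-1 .$$
This says that the second copy of $A_{n-1}$ inside $A_n$ begins exactly at position $q^n-1$, i.e.\ at the weight of index $n$ in the numeration system. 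This single coincidence is what couples the combinatorics of $\phi$ to the weights $q^i-1$, and getting this offset exactly right is the step I expect to be the main obstacle.

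Granting the identity, the inductive step is bookkeeping. By the induction hypothesis the first $A_{n-1}$ contributes $a$'s at the positions $R_{n-1}$, while the second $A_{n-1}$, starting at $q^n-1$, contributes $a$'s at $(q^n-1)+R_{n-1}$. Since $q^n-1$ is precisely the weight absent from $R_{n-1}$, the union is
$$ R_{n-1}\ \cup\ \bigl((q^n-1)+R_{n-1}\bigr) = R_n ,$$
the first piece being the subsets $S\subseteq\{1,\ldots,n\}$ with $n\notin S$ and the second those with $n\in S$. Two inequalities finish the argument. First, $\max R_{n-1}\leq\sum_{i=1}^{n-1}q^i<q^n-1$ (valid for $q\geq2$) shows the two blocks of $a$'s are disjoint, so the union is disjoint; in particular all subset-sums are distinct and $|R_n|=2^n$. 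Second, $q^{n+1}-1\geq L_n$ (with equality exactly when $q=2$) shows that no representable integer below $L_n$ can use a weight of index $>n$, so $R_n$ is exactly the representable set in $[0,L_n)$; together with $\max R_n<L_n$ this gives $R_n\subseteq[0,L_n)$.

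Finally, letting $n\to\infty$ and using $L_n\to\infty$ together with the prefix property, the positions of $a$ in ${\bf w}$ are exactly $\bigcup_{n\geq0}R_n$, which is the full set of representable integers. Hence $\tau({\bf w})$ is the characteristic word ${\bf x}_q$, as claimed. All remaining verifications (the closed form for $L_n$, the two boundary inequalities, and the base case $A_0=a$, $R_0=\{0\}$) are routine once the factorization and the offset identity above are in hand.
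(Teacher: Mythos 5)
Your proof is correct, and there is nothing in the paper to compare it against: the paper explicitly defers the argument (``Proofs of these two theorems will appear in the final version of this paper''), so your write-up supplies a proof where the paper has none. I checked the computational skeleton and it holds up. Since $\phi^{n-1}(b)=b$ and $\phi^{n-1}(c)=c^{q^{n-1}}$, the factorization $A_n=A_{n-1}\,b^{q-2}\,A_{n-1}\,c^{q^n(q-2)}\,b$ is right; the recurrence $L_n=2L_{n-1}+(q-1)+q^n(q-2)$ with $L_0=1$ does give $L_n=q^{n+1}-q+1$, hence the crucial offset identity $L_{n-1}+(q-2)=q^n-1$ placing the second copy of $A_{n-1}$ exactly at the new weight; and the two boundary inequalities hold for all $q\ge 2$: $\max R_{n-1}\le \sum_{1\le i\le n-1} q^i<q^n-1$, and $q^{n+1}-1\ge L_n$ (tight precisely at $q=2$), so no representation of an integer below $L_n$ can involve a weight of index greater than $n$, which gives that $R_n$ is exactly the set of representable integers in $[0,L_n)$. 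Two minor remarks. First, the disjointness of $R_{n-1}$ and $(q^n-1)+R_{n-1}$ is not actually needed for the set identity $R_{n-1}\cup\bigl((q^n-1)+R_{n-1}\bigr)=R_n$, which is pure subset-splitting according to whether $n$ lies in $S$; it only supports your aside that $|R_n|=2^n$. Second, it is worth noting that at $q=2$ the morphism degenerates to $a\to aab$, $b\to b$ (the letter $c$ never occurs), and your argument specializes correctly to the word ${\bf z}$, recovering the factorization of ${\bf z}$ given in the paper's Theorem~2 --- a reassuring check that the equality case $q^{n+1}-1=L_n$ causes no trouble in your range argument.
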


\begin{remark}
This theorem was obtained in an 1995 email discussion between
the first author and G. Rote.
\end{remark}

\begin{remark}
The expressions for $q > 3$ in the previous theorem correspond to a
transition matrix with dominant eigenvalue $q$. 
The subword complexity of this sequence is not $q$-automatic, as proved in 
\cite{Allouche:1990}. Hence it is
not ultimately periodic.
Using a theorem of F. Durand \cite{Durand:2011},
this implies that the sequence cannot be $k$-automatic
for any $k$ that is multiplicatively independent of $q$.
Hence this sequence cannot be $k$-automatic for any $k$.


\end{remark}

Next, we compute the exact value of the first difference of the complexity
function.

\begin{theorem}
Let $q \geq 3$, and
let $d_q (n) = \rho_{{\bf x}_q} (n+1) -  \rho_{{\bf x}_q} (n)$ for
$n \geq 0$ be the first difference of the complexity function for
${\bf x}_q$.  Then $d_q (n) \in \lbrace 1,2 \rbrace$, and
$$ (d_q (n))_{n \geq 0} = \prod_{i \geq 1} 1^{a_q(i)} 2^{b_q (i)} ,$$
where $a_q (i) = (q-3) q^{i-1} + 2$ and $b_q(i) = q^i - 1$ for
$i \geq 1$.
\end{theorem}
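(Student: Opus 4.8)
The plan is to reduce the statement to a count of right-special factors of $\mathbf{x}_q$ and then to read that count off from the $0$-run structure of $\mathbf{x}_q$, in close parallel with the $b$-run analysis of Section~2. Since $\mathbf{x}_q$ is a word over the two-letter alphabet $\{0,1\}$, every factor has either one or two right extensions inside $\mathbf{x}_q$. Writing $\deg^+(w)$ for the number of letters $c$ with $wc$ a factor, the elementary identity $\rho_{\mathbf{x}_q}(n+1)-\rho_{\mathbf{x}_q}(n)=\sum_{|w|=n}(\deg^+(w)-1)$ shows that $d_q(n)$ equals the number of \emph{right-special} factors of length $n$, i.e.\ those $w$ with both $w0$ and $w1$ factors of $\mathbf{x}_q$. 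The theorem is therefore equivalent to the assertion that there are always exactly one or two right-special factors of each length, together with the precise determination of the lengths carrying two.

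From the representation $\mathbf{x}_q(m)=1$ iff $m=\sum_{j\ge1}\epsilon_j(q^j-1)$ (equivalently, from the morphism and the coding $\tau$) I would first establish two facts about the maximal $0$-runs of $\mathbf{x}_q$. First, the least positive representable integer is $q-1\ge2$, so $11$ never occurs; hence a right-special factor must end in $0$, and the all-zero factor $0^n$ is right-special for every $n$, because $0$-runs of unbounded length occur while every run is terminated by a $1$. This already gives $d_q(n)\ge1$. Second, the sequence of run lengths strictly alternates between the minimal length $q-2$ and strictly longer (``big'') lengths, so each $1$ of $\mathbf{x}_q$ is flanked by a run of length exactly $q-2$ on one side and a big run on the other; in particular two consecutive minimal runs never occur. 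In terms of the pair (left run $L$, right run $R$) adjacent to a given $1$, this means $\{L,R\}$ always consists of $q-2$ together with one value from the set $\mathcal{B}$ of big run lengths, whose smallest elements grow like $q^i$. This dichotomy, the analogue here of the fact that $b$-runs in ${\bf z}$ have tightly controlled lengths, is the structural heart of the argument.

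A factor $w\ne 0^n$ is right-special precisely when its final $1$ can occur, consistently with the part of $w$ preceding it, with two different right-runs: one equal to the number $\ell$ of trailing zeros of $w$ (this produces $w1$) and one strictly larger (this produces $w0$). By the dichotomy just described this happens in exactly two ways: either $\ell=q-2$ and the final $1$ is near enough to the start of $w$ that its left run is unconstrained, so that $w=0^p1\,0^{q-2}$ with $0\le p\le q-2$; or $\ell$ is one of the big values $B\in\mathcal{B}$ and the final $1$ is immediately preceded by the minimal run $0^{q-2}$, so that its right run is free to equal $B$ or to exceed it. In the latter case $w$ grows to the left, block by block, as $n$ increases, each prepended block being forced by the alternation of runs; the growth continues until it would require two consecutive minimal runs, which never occur. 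Thus to each big value $B$ there corresponds a contiguous range of lengths $n$ admitting a unique second right-special factor, and these ranges, interleaved with the lengths admitting none, produce the alternating pattern $\prod_{i\ge1}1^{a_q(i)}2^{b_q(i)}$. As a consistency check, the first case gives lengths $n\in\{q-1,\dots,2q-3\}$ (a block of $q-1=b_q(1)$ twos) preceded by $n\in\{0,\dots,q-2\}$ where only $0^n$ is right-special (a block of $q-1=a_q(1)$ ones), matching the formula for $i=1$.

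The remaining, and principal, difficulty is the exact bookkeeping of the previous paragraph: one must carry out an induction on the level $i$ (equivalently, on the iteration depth of the morphism $c\to c^q$, which inflates the run lengths by a factor of $q$) that tracks the length of the leftward extension of the second right-special factor associated with the $i$-th big run, and then pin down the two endpoints of the corresponding range of $n$ from the explicit run lengths. I expect the length of each $2$-block to emerge as $b_q(i)=q^i-1$ precisely because the extension can absorb one full copy of the level-$i$ structure before stalling, while the intervening $1$-block has length $a_q(i)=(q-3)q^{i-1}+2$ because that is exactly the gap between the stalling of the level-$(i-1)$ factor and the first availability of the level-$i$ factor. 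Verifying these two counts, and confirming that no third right-special factor ever appears, is where essentially all the work lies.
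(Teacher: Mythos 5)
Your reduction is set up correctly, and several of your structural claims are true and provable: over the alphabet $\{0,1\}$ the difference $d_q(n)$ counts right-special factors of length $n$; the word $11$ never occurs (the least positive representable integer is $q-1\geq 2$), so right-special factors end in $0$; the factor $0^n$ is right-special for every $n$; and the alternation of $0$-runs (each $1$ flanked by one run of length exactly $q-2$ and one ``big'' run) does follow from the shape of the morphism, since the image of $a$ is $ab^{q-2}ac^{q(q-2)}b$, so the $1$'s come in pairs separated by $0^{q-2}$. Your consistency check at level $i=1$ is also correct. (For what it is worth, the paper itself defers the proof of this theorem to a final version, so there is no proof of record to compare against.)

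However, what you have written is a plan, not a proof: the entire content of the theorem --- the exact values $a_q(i)=(q-3)q^{i-1}+2$ and $b_q(i)=q^i-1$ --- is nowhere derived, and you say so yourself (``where essentially all the work lies''). To close the gap you would need at least three missing ingredients. First, the explicit catalogue of big run lengths and their arrangement in ${\bf x}_q$: for $q=3$ these are $5, 15, 43, 125, \ldots$, interleaved ruler-fashion, and they are \emph{not} simply related to the block lengths $q^i-1$, so the claimed correspondence ``one big value $B$ $\leftrightarrow$ one contiguous range of lengths of size $q^i-1$'' requires genuine computation, not just the qualitative statement that the leftward extension ``stalls.'' Second, the induction that tracks exactly how far the second right-special factor associated with a given big run extends to the left before stalling, producing the two endpoints of each $2$-block; without pinning down both endpoints you cannot obtain either $a_q(i)$ or $b_q(i)$. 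Third, a proof that the ranges coming from distinct big values neither overlap (overlap would force $d_q(n)=3$ somewhere, contradicting the theorem) nor leave unaccounted lengths; your text merely asserts that the ranges ``interleave'' to produce the stated pattern. Since these steps are precisely where the specific constants in the statement must emerge, the proposal as it stands does not establish the theorem.
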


Previously, Firicel \cite{Firicel:2010,Firicel:2011} showed that the
complexity function for $q \geq 3$ is $\Theta(n)$.

Proofs of these two theorems will appear in the final version of this paper.

\section{The third generalization}

We can also generalize our construction in a third way.  Again, we use
$q^i - 1$ as the basis for a numeration system, but now we allow the
digit set to be $\{ 0, 1, \ldots, q-1 \}$.  For $q \geq 2$,
let the infinite word
${\bf y}_q = y_q (0) y_q (1) y_q (2) \cdots$ be the characteristic
sequence of those integers representable in the form
$ \sum_{i \geq 1} a_i (q^i - 1)$ with $a_i \in \{ 0, 1, \ldots, q-1 \}$.

\begin{theorem}
The infinite word ${\bf y}_q$ is the fixed point of the morphism
$1 \rightarrow (1 0^{q-2})^q 0$, $0 \rightarrow 0$. 
\end{theorem}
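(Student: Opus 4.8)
The plan is to prove the statement directly, by showing that the positions of the letter $1$ in the fixed point are exactly the representable integers. Write $\sigma$ for the morphism $1 \rightarrow (10^{q-2})^q 0$, $0 \rightarrow 0$; since $\sigma(1)$ begins with $1$, the morphism is prolongable and we may set ${\bf y}_q = \sigma^\omega(1)$. Abbreviate the weights by $w_i := q^i - 1$. First I would record two elementary facts about the iterates $\sigma^n(1)$. A one-line induction (each $1$ contributes $q$ ones and $(q-1)^2$ zeros, each $0$ contributes itself) gives the length $L_n := |\sigma^n(1)| = q^{n+1} - q + 1$. More importantly, since $\sigma^{n+1} = \sigma^n \circ \sigma$ and $\sigma^n(0) = 0$, applying $\sigma^n$ to $(10^{q-2})^q 0$ and distributing over concatenation yields the self-similar decomposition
$$\sigma^{n+1}(1) = \bigl(\sigma^n(1)\,0^{q-2}\bigr)^q\,0 .$$
The crucial arithmetic observation is then that the offset between consecutive copies equals exactly the next weight: $L_n + (q-2) = q^{n+1} - 1 = w_{n+1}$.

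I would then prove by induction on $n$ the statement $P(n)$: the set of positions of $1$ in the length-$L_n$ word $\sigma^n(1)$ is exactly $\{ \sum_{i=1}^n a_i w_i : a_i \in \{0,\dots,q-1\} \}$. The base case $n=0$ reads $\{0\} = \{0\}$. For the inductive step, the decomposition above places the $k$-th copy of $\sigma^n(1)$ (for $k = 0,1,\dots,q-1$) starting at position $k\,w_{n+1}$, so by $P(n)$ the occurrences of $1$ in $\sigma^{n+1}(1)$ sit precisely at the positions $k\,w_{n+1} + m$ with $0 \le k \le q-1$ and $m = \sum_{i=1}^n a_i w_i$. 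Renaming the leading digit $a_{n+1} := k$, this set is exactly $\{ \sum_{i=1}^{n+1} a_i w_i \}$, which is $P(n+1)$.

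To conclude, it remains to identify $P(n)$ with genuine representability and to pass to the limit. Here I would verify the range bounds: the maximum value of $\sum_{i=1}^n a_i w_i$ is $q^{n+1} - q - n(q-1)$, which is strictly below $L_n$, so every integer representable using only $w_1,\dots,w_n$ already lies in $[0,L_n)$; conversely, since $w_{n+1} = q^{n+1}-1 \ge L_n$, no weight $w_j$ with $j \ge n+1$ can appear with a nonzero digit in a representation of an integer $< L_n$. Hence, for every $n$, the $1$-positions of the prefix $\sigma^n(1)$ are exactly the representable integers lying in $[0,L_n)$. As the words $\sigma^n(1)$ are nested prefixes of ${\bf y}_q$ with $L_n \to \infty$, it follows that the positions of $1$ in ${\bf y}_q$ are exactly the integers representable as $\sum_{i \ge 1} a_i(q^i-1)$ with $a_i \in \{0,\dots,q-1\}$, which is the assertion of the theorem.

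I expect the only delicate point to be these boundary estimates rather than the combinatorial skeleton. One must check that the separating factors $0^{q-2}$ and the trailing $0$ produced by the decomposition never fall on a representable position, i.e. that the inequality $m < L_n$ always forces $k\,w_{n+1}+m$ into the $k$-th copy of $\sigma^n(1)$ rather than into a gap, and that the leading-digit extraction $N = a_{n+1}w_{n+1} + m$ keeps the remainder $m$ in the admissible range $[0,L_n)$. Both reduce to the single identity $w_{n+1} - L_n = q-2$ together with the geometric-series bound on $\sum a_i w_i$ sketched above; everything else is routine bookkeeping, and in particular the argument is insensitive to whether the representation of a given integer is unique.
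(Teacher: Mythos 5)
Your proof is correct. Note, however, that the paper contains no proof of this theorem to compare against: it explicitly defers the proofs for the third generalization to ``the final version of this paper,'' so your argument stands on its own rather than paralleling or diverging from anything in the text. Checking the details: the self-similar decomposition $\sigma^{n+1}(1) = \bigl(\sigma^n(1)\,0^{q-2}\bigr)^q\,0$ follows from $\sigma^n(0)=0$; the length formula $L_n = q^{n+1}-q+1$ and the key offset identity $L_n + (q-2) = q^{n+1}-1 = w_{n+1}$ are right; and the two range bounds do exactly the work you assign them --- the maximum $(q-1)\sum_{i=1}^n w_i = q^{n+1}-q-n(q-1)$ is indeed $< L_n$, while $w_{n+1} \geq L_n$ for $q \geq 2$, so the representable integers below $L_n$ are precisely those using only $w_1,\dots,w_n$. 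You are also right that the induction is a pure set identity, so non-uniqueness of representations (which does occur in this numeration system) is harmless. As a sanity check, at $q=2$ your $\sigma$ is the paper's original morphism $a \rightarrow aab$, $b \rightarrow b$, and your $P(n)$ reproduces the factorization of $\bf z$ in Theorem~\ref{one}; the argument is the natural one and fills the gap the paper leaves open.
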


\begin{theorem}
The first difference of the subword complexity of ${\bf y}_q$ is
the sequence given by 
$$\prod_{i \geq 0} ([q^i..q^{i+1}] \amalg (q-1)) ,$$
where by $w \amalg n$ for $w = a_1 a_2 \cdots a_j$  we mean
$a_1^n a_2^n \cdots a_j^n$.
\end{theorem}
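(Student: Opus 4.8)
The plan is to reduce the statement to the already-established first-difference formula for $\mathbf{z}_q$ (namely $\prod_{i\ge 0}[q^i..q^{i+1}]$) by exhibiting $\mathbf{y}_q$ as a letter-to-block image of $\mathbf{z}_q$. Let $\sigma$ be the uniform morphism of length $q-1$ given by $\sigma(a)=1\,0^{q-2}$ and $\sigma(b)=0^{q-1}$. I would first check the intertwining identity $\sigma\circ h_q=\hat h\circ\sigma$, where $\hat h$ is the morphism $1\to(1\,0^{q-2})^q 0$, $0\to 0$ of the previous theorem; then $\sigma(\mathbf{z}_q)$ is a fixed point of $\hat h$ beginning with $1$, and since $\hat h(1)$ begins with $1$ and is longer than one letter, $\hat h$ has a unique such fixed point, whence $\mathbf{y}_q=\sigma(\mathbf{z}_q)$. (Equivalently this follows from the weight identity $q^i-1=(q-1)\cdot\frac{q^i-1}{q-1}$, which shows $y_q(n)=1$ exactly when $(q-1)\mid n$ and $z_q(n/(q-1))=a$.) Because $\amalg(q-1)$ distributes over concatenation, the claimed sequence $\prod_{i\ge 0}([q^i..q^{i+1}]\amalg(q-1))$ is exactly the first-difference sequence of $\mathbf{z}_q$ with every term repeated $q-1$ times. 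Writing $d_y$ and $d_z$ for the two first differences, the whole theorem is therefore equivalent to the single identity $d_y(n)=d_z(\lfloor n/(q-1)\rfloor)$.

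Since both words are binary, I will use the standard fact that the first difference of the complexity of a binary word equals the number of its right-special factors of that length (a factor $u$ being right-special when both of its one-letter right extensions occur). The structural key is that in $\mathbf{y}_q=\sigma(\mathbf{z}_q)$ every letter $1$ sits at a position divisible by $q-1$, while every position not divisible by $q-1$ carries a $0$. From this I extract three facts: (i) the all-zero factor $0^n$ is right-special for every $n$ (both $0^{n+1}$ and $0^n1$ occur, using that $\mathbf{z}_q$ has arbitrarily long $b$-runs), contributing the constant $1$ that mirrors the right-special factor $b^m$ of $\mathbf{z}_q$; (ii) a factor $u$ containing at least one $1$ has a single well-defined phase $r\in\{0,\dots,q-2\}$, determined by the position of any $1$ inside it; and (iii) such a $u$ can be right-special only when the position immediately following it is divisible by $q-1$, which forces $r\equiv -n\pmod{q-1}$ and hence pins down $r$ uniquely in terms of $n$.

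With the phase fixed, I would then set up a length-preserving bijection between the right-special factors of $\mathbf{y}_q$ of length $n$ containing a $1$ and the right-special factors of $\mathbf{z}_q$ of length $m=\lfloor n/(q-1)\rfloor$ containing an $a$. In one direction, reading off the values of $u$ at the block-start positions it covers yields a factor $w$ of $\mathbf{z}_q$; a short computation (writing $n=(q-1)c-r$ with $c\ge 1$) shows that $w$ has length exactly $m$ in every phase, that $w$ contains an $a$ iff $u$ contains a $1$, and that the right extensions of $u$ by $0$ and $1$ correspond precisely to the extensions of $w$ by $b$ and $a$, so $u$ is right-special iff $w$ is. In the reverse direction, $w\mapsto\sigma(w)$, prefixed by the forced pad $0^{q-1-r}$ when $r>0$ (whose value is independent of the truncated initial block, since both $\sigma(a)$ and $\sigma(b)$ end in $0^{q-1-r}$), reconstructs the unique $u$; injectivity of $\sigma$ on words gives injectivity of the map. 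Combining the bijection with fact (i) yields $d_y(n)=1+\#\{\text{right-special length-}m\text{ factors of }\mathbf{z}_q\text{ containing an }a\}=d_z(m)$, the desired identity.

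I expect the main obstacle to be the phase bookkeeping in the bijection: correctly relating $n$, the phase $r$, and the projected length $m=\lfloor n/(q-1)\rfloor$, and verifying that the pad $0^{q-1-r}$ is genuinely forced. This comes bundled with several boundary cases: small lengths $n<q-1$ (where $m=0$ and only $0^n$ is right-special), the possibility that a right-special factor occurs only as a prefix of $\mathbf{z}_q$ (handled by noting that $\mathbf{z}_q$ is recurrent, so every factor has an occurrence with a predecessor supplying the pad), and the explicit verification that $0^n$ and $b^m$ are right-special. Each of these is routine once the phase structure of $\sigma(\mathbf{z}_q)$ is in hand, but care is needed to make the two ends of the bijection match exactly.
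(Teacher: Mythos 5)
You should know at the outset that the paper offers no proof to compare against: this theorem, like the other theorem of its section, is stated with the remark that proofs ``will appear in the final version of this paper.'' Judged on its own merits, your argument is correct and its outline is complete. The identity $\mathbf{y}_q=\sigma(\mathbf{z}_q)$ with $\sigma(a)=1\,0^{q-2}$, $\sigma(b)=0^{q-1}$ does hold: the intertwining $\sigma\circ h_q=\hat h\circ\sigma$ is a one-line check ($\sigma(a^qb)=(10^{q-2})^q0^{q-1}=\hat h(1)\,0^{q-2}=\hat h(\sigma(a))$), and the weight identity gives it directly from the definitions in the paper's Sections 3 and 5. Together with distributivity of $\amalg(q-1)$ over concatenation, this correctly reduces the theorem to the identity $d_y(n)=d_z(\lfloor n/(q-1)\rfloor)$ between the first differences, i.e., between counts of right-special factors. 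Your phase analysis is the right mechanism: since every $1$ of $\mathbf{y}_q$ sits at a position divisible by $q-1$, a length-$n$ factor $u$ containing a $1$ can be right-special only if each of its occurrences ends just before a multiple of $q-1$, which forces the unique decomposition $u=0^s\sigma(w)$ with $s=n\bmod(q-1)$ and $|w|=\lfloor n/(q-1)\rfloor$ (using that both $\sigma(a)$ and $\sigma(b)$ end in $0^s$ for $s\le q-2$); right extensions of $u$ by $1$ and $0$ then correspond exactly to right extensions of $w$ by $a$ and $b$, and adding the special factors $0^n$ and $b^m$ on the two sides gives the identity. A spot check at $q=3$ confirms the bijection (the right-special factors of lengths $4$ and $5$ of $\mathbf{y}_3$ containing a $1$ are $1010,1000$ and $01010,01000$, matching the right-special factors $aa,ab$ of $\mathbf{z}_3$, consistent with $d_y(4)=d_y(5)=d_z(2)=3$). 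Two points should be made explicit in a full write-up. First, recurrence of $\mathbf{z}_q$, which you invoke both to supply the pad $0^s$ and to ensure that occurrences of $wa$ and $wb$ exist at positions $\ge 1$: it is true but not automatic for morphic fixed points (the fixed point of $a\rightarrow ab$, $b\rightarrow b$ is not recurrent); here it follows because $a$ occurs infinitely often in $\mathbf{z}_q$, so every prefix $h_q^N(a)$ recurs. Second, your proof inherits its base case from the Corollary of Section 3 (the formula $\prod_{i\ge 0}[q^i..q^{i+1}]$ for $d_z$), which the paper itself justifies only by ``exactly the same as for $q=2$''; that dependence is legitimate but worth flagging.
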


Proofs of these two theorems will appear in the final version of this paper.

\end{document}